\documentclass{elsarticle}
\usepackage{imakeidx}
\usepackage{enumitem}
\makeindex[intoc]

\newcommand{\IN}[1]{\index{#1|BH}#1}
\usepackage[utf8]{inputenc}
\usepackage[margin=1in]{geometry} 
\usepackage[toc,page]{appendix}
\usepackage{amsmath,amsthm,amssymb}
\usepackage{bbm}
\usepackage{dsfont}
\usepackage{setspace}
\usepackage{multicol}
\usepackage{xcolor}
\usepackage{graphicx}
\usepackage{tikz}
\usetikzlibrary{shapes.geometric, arrows}
\usetikzlibrary{arrows.meta}
\usepackage{capt-of}
\bibliographystyle{plainnat}
\usepackage[colorlinks]{hyperref}
\hypersetup{
    colorlinks=true,
    linkcolor=blue,
    filecolor=magenta,      
    urlcolor=cyan,
    citecolor=red
}
\usepackage{cleveref}
\usepackage{titleref}
\allowdisplaybreaks

\newcommand{\teddydone}[1]{{\color{black} #1}}
\newcommand{\mahya}[1]{{\color{black} #1}}
\usepackage[normalem]{ulem}
\newcommand{\stkout}[1]{\ifmmode\text{\sout{\ensuremath{#1}}}\else\sout{#1}\fi}

\def\R{{\mathbb R}} 
\def\Z{{\mathbb Z}} 
\def\N{{\mathbb N}} 
\def\G{{\mathcal G}} 
\def\W{{\mathcal W}} 
\def\B{{\mathcal B}}
\def\V{{\mathcal V}}
\def\UL{{\text{UL}}}
\def\LR{{\text{LR}}}
\def\I{{\mathcal I}}

\newtheorem*{lemma*}{Lemma}





\theoremstyle{plain}
\newtheorem{theorem}{Theorem}[section]
\newtheorem{proposition}[theorem]{Proposition}
\newtheorem{lemma}[theorem]{Lemma}
\newtheorem{claim}[theorem]{Claim}
\newtheorem{corollary}[theorem]{Corollary}

\theoremstyle{definition}
\newtheorem{definition}[theorem]{Definition}

\newtheorem{assumption}[theorem]{Assumption}
\theoremstyle{remark}
\newtheorem{remark}[theorem]{Remark}

\numberwithin{subcase}{case}
\newtheorem{notation}[theorem]{Notation}

\begin{document}
\begin{frontmatter}

\title{Robust Recovery of Robinson Property in $L^p$-Graphons: A Cut-Norm Approach}

\address[add1]{Department of Mathematical Sciences, Ewing Hall, University of Delaware in Newark, DE}
\address[add2]{Department of Mathematics, Victoria Building, Toronto Metropolitan University in Toronto, ON}

\author[add1]{Mahya Ghandehari}
\ead{mahya@udel.edu}

\author[add2]{Teddy Mishura}
\ead{tmishura@torontomu.ca}

\begin{abstract}
This paper investigates the Robinson graphon completion/recovery problem within the class of $L^p$-graphons, focusing on the range $5<p\leq \infty$. 
A graphon $w$ is Robinson if it satisfies the Robinson property: if $x\leq y\leq z$, then $w(x,z)\leq \min\{w(x,y),w(y,z)\}$. 
We demonstrate that if a graphon possesses localized near-Robinson characteristics, it can be effectively approximated by a Robinson graphon in terms of cut-norm. 
To achieve this recovery result, we introduce a function $\Lambda$, defined  on the space of $L^p$-graphons, 
which quantifies the degree to which a graphon $w$ adheres to the Robinson property. 
We prove that $\Lambda$ is a suitable gauge for measuring the Robinson property when proximity of graphons is understood in terms of cut-norm. Namely, we show that (1) $\Lambda(w)=0$ precisely when $w$ is Robinson; (2) $\Lambda$ is cut-norm continuous, in the sense that if two graphons are close in the cut-norm, then their $\Lambda$ values are close; and (3) for $p > 5$, any $L^p$-graphon $w$ can be approximated by a Robinson graphon, with error of the approximation bounded in terms of $\Lambda(w)$. When viewing $w$ as a noisy version of a Robinson graphon, our method provides a concrete recipe for recovering a cut-norm approximation of a noiseless $w$. 
Given that any symmetric matrix is a special type of graphon, our results can be applicable to symmetric matrices of any size.
Our work extends and improves previous results, where a similar question for the special case of $L^\infty$-graphons was answered.
\end{abstract}

\begin{keyword}
Robinson \sep graphon recovery  \sep  graphons \sep $L^p$-graphons \sep matrix completion
\MSC[2020]  05C62  \sep 15A83 \sep 15B52
\end{keyword}
\end{frontmatter}
\tableofcontents
\section{Introduction}
Given an incomplete matrix $M_I$ with many missing entries,  is it possible to recover the underlying complete matrix $M$?
An instance of such a scenario is when data is aggregated from customer ratings, where any given customer only rates a few items. In general, this problem is ill-posed; indeed, with low dimensional information, it is not possible to uniquely recover high dimensional data. However, in practical applications, it is often the case that additional structure about the original matrix $M$ is known. In the above example regarding costumer ratings, it is usually assumed that most people's preferences are based on very few factors, implying that the complete matrix $M$ should be low rank. Leveraging this assumption makes the completion of $M_I$ quite tractable, even if the majority of its entries are missing. 
In general, the problem of filling in the incomplete entries in a matrix $M_I$ under the condition that the completed matrix satisfies a specified structural property is known as the \emph{matrix completion problem}. 
Due to its many applications in data science, the completion problem for low rank matrices has gained the attention of many researchers (see e.g.~\cite{Candes2009, Tao2010,   Gross, Jain, keshavan2010, recht2011, SunLuo,  Vandereycken}).
Furthermore, the matrix completion problem has been explored in the context of various other essential structural properties, including Hankel, Toeplitz, and moment structures (see \cite{eftekhari2018, Fazel,GOYENS2023498,WANG2016133} for some examples).
Unfortunately, even known information can be corrupted by noise. 
In this case, the problem of completion becomes that of \emph{recovery}: Given an observed matrix $\hat{M}$, find a matrix $M$ satisfying a prescribed structural property such that $\|M-\hat{M}\|$ is small.  
For examples of recovery of noisy matrices, see 
\cite{agarwal2012,candes2010noise,Keshavan-noisy,klopp-noisy,Klopp2017,Koltchinskii2011,Rohde,tao2011} for low rank matrices, \cite{flammarion2019,ma2021} for monotone matrices, and \cite{cai2011,chen2015,rudelson2013} for covariance matrices.

%
An important structural property of matrices/graphons is the \textit{Robinson} property.
A \emph{Robinson matrix}, also called an \emph{R-matrix}, is a symmetric matrix $A=[a_{ij}]$ such that for $i \leq  j \leq k$ we have 
\begin{equation*}
    a_{ik} \leq \min\{a_{ij},a_{jk}\}.
\end{equation*}  
Robinson matrices are well-studied objects in data science, largely because of their connection with the  classical seriation problem \cite{fogel2014,Liiv_2010,Mirkin1984GraphsAG,prea-fortin,seston}. The objective of the seriation problem is to use pairwise comparisons of a set of items to recover their linear ordering. 
The seriation problem is easily transferred to the problem of determining whether a given symmetric matrix can be permuted into a Robinson matrix, and finding the Robinson ordering. This problem can be solved in polynomial time; see \cite{Mirkin1984GraphsAG} for the original algorithm, and \cite{atkins1998,Fortin2017RobinsonianMR,LAURENT2017151,laurent2017} for more recent efficient algorithms. 
When the underlying matrix is perturbed by noise, recovery of a Robinson ordering turns out to be a very challenging problem. 
For a discussion on the NP-hardness of this problem when  $\ell^p$-norm approximations (with $p<\infty$) are required, see \cite{barthelemy2001np}; and for a polynomial time algorithm when seeking $\ell^\infty$-norm approximations, see \cite{chepoi2009}.

To put matrices of various size in one framework, and also to develop a probabilistic Robinson theory that can treat matrices of growing sizes, we focus on \emph{graphons} as a versatile extension of symmetric matrices. 
Graphons are measurable real-valued functions on $[0,1]^2$ that are symmetric with respect to the diagonal. A graphon is called an $L^p$-graphon if its $p$-norm is finite. 
$L^{\infty}$-graphons are typically referred to as graphons, and were introduced in \cite{LOVASZ2006933} as the limit objects of converging sequences of dense graphs (i.e.~graph sequences $\{G_n\}_{n\in{\mathbb N}}$ in which the number of edges is quadratic in the number of vertices).
%
In \cite{Borgs_2018,Borgs_2019}, Borgs {et al.}~extended this point of view, and proved that $L^p$-graphons (when $1 < p < \infty$)  are limit objects of sparse graph sequences.
The mode of convergence in dense/sparse graph limit theory is easily captured using the \textit{cut-norm}. This norm was introduced in \cite{friezekannan} for matrices, and can be extended to graphons in a natural manner: 
\begin{equation*}
\|w\|_{\square}=\sup_{A,B \subseteq [0,1]}\Bigg|\iint_{A\times B} w(x,y)\, dxdy\Bigg|,
\end{equation*}
where the supremum is taken over all  Lebesgue measurable subsets $A,B$.

Graph limit theory provides an effective approach for capturing the common large-scale features of networks.
This viewpoint is now the base for many network analysis methods.
In the  limit theory of graphs, {graphons} represent random processes, called $w$-random graphs, that generate networks. Networks produced by the same graphon have similar large-scale features.
For this reason, we think of graphons as the large-scale blueprint of any graph that they generate.
Naturally, to study networks, we only need to study the (underlying) graphon, rather than the individual networks generated by that graphon. 
We refer to \cite{lovaszbook} for a comprehensive account of (dense) graph limit theory. To see some applications of graphons in network analysis, we refer to \cite{uncertainty,cgao2015,cgao2021,sgao2020,Ghanehari-Janssen-Kalyaniwalla,PDE1,Medvedev2014}.

Robinson $L^p$-graphons are defined similarly to Robinson matrices (see Subsection~\ref{subsec-robinson}). 
In this paper, we study the graphon recovery problem for the class of Robinson graphons. 
The limiting Robinson graphon of a converging graph sequence, if recovered successfully in terms of cut-norm, can be used to seriate the graphs in that sequence (see for example \cite{janssen2022}  for a randomized algorithm or \cite{Aaron-opr} for a spectral approach).
For this reason, we must measure proximity in terms of cut-norm when performing the Robinson graphon recovery.
Our objective, therefore, is to obtain a statement of the following general form:
\begin{quote}
\emph{Given a graphon $w$ that is locally almost Robinson, there exists a Robinson graphon $u$ such that $\|w-u\|_{\Box}$ is sufficiently small.}
\end{quote}
To begin, it is essential to formalize the concept of locally almost Robinson graphons. 
In this paper, we introduce a graphon function denoted as $\Lambda$, which serves as a gauge for assessing the Robinson property by quantifying localized (aggregated) deviations from it  (Definition~\ref{def:lambda}). 
In informal terms, we consider a graphon $w$ to be locally almost Robinson if $\Lambda(w)$ is close to 0.
Indeed, we prove that $\Lambda$ serves as a suitable measurement for the Robinson property when $p > 5$  (see Proposition~\ref{prop:Lambda-prop} and Theorem~\ref{thm:main-result}); that is, we show that $\Lambda$ is subadditive and satisfies the following three properties:
\begin{itemize}
    \item\textbf{(Recognition)} $\Lambda(w) = 0$ if and only if $w$ is Robinson.

    \item\textbf{(Continuity)} $\Lambda$ is continuous with respect to the cut-norm.

    \item\textbf{(Recovery)} Given an $L^p$-graphon $w$, there exists a Robinson graphon $u$ such that $\|w-u\|_{\Box} \leq c_p\Lambda(w)^{\alpha_p}$, 
    where $\alpha_p\in (0,1)$ and $c_p>0$ are constants depending only on $p$.
\end{itemize}
The final item in the list presented above may be interpreted as follows: if $w$ is locally almost Robinson (i.e., $\Lambda(w)$ is near 0), then a Robinson graphon $u$ can be found so that $\|w-u\|_{\Box}$ is also small. We think of $u$ as a \emph{Robinson approximation} for $w$.
Our results (in particular, Definition~\ref{def:R(w)} and Theorem~\ref{thm:main-result}) provide a systematic approach for creating Robinson approximations for graphons that are almost Robinson on a local scale.

The current work extends the results of \cite{Chuangpishit_2015,ghandehari2020graph}, where a similar problem was addressed within the context of $L^\infty$-graphons. Specifically, \cite{Chuangpishit_2015} introduced a function $\Gamma$ defined on the space of $L^\infty$-graphons. Much like $\Lambda$, this function $\Gamma$ adheres to three crucial properties: 
(1) recognition: $\Gamma(w)=0$ precisely when $w$ is Robinson, (2) continuity: when equipped with the cut-norm, the function $\Gamma$ exhibits continuity over the space of $L^\infty$-graphons, and (3) recovery: for every $L^\infty$-graphon $w$, there exists a Robinson $L^\infty$-graphon $u$ such that $\|w-u\|_{\Box} \leq 14\Lambda(w)^{1/7}$.
The approach in the current paper offers a two-fold advantage. Firstly, our newly proposed function $\Lambda$ possesses the desired properties of recognition, continuity, and recovery for all $L^p$-graphons with $p>5$, extending beyond the restricted case of $L^\infty$-graphons. 
Secondly, the formula defining the function $\Lambda$ is less intricate compared to $\Gamma$.
Leveraging the simplicity of $\Lambda$ and applying meticulous estimates, we not only extend the results achieved by $\Gamma$ to encompass $L^p$-graphons (as detailed in Theorem~\ref{thm:main-result}), but also notably improve the outcomes within the domain of $L^{\infty}$-graphons (as demonstrated in Corollary~\ref{cor:bnddstability}).

It is worth noting that the proof of continuity and recovery of $\Gamma$ in \cite{Chuangpishit_2015,ghandehari2020graph}  cannot be adjusted to work for the setting of $L^p$-graphons. 
\mahya{Indeed, the proof of the $\|\cdot\|_\Box$-continuity of $\Gamma$ in \cite{Chuangpishit_2015}  relies upon the continuity of the triangular cut operator when acting on the space of $[-1,1]$-valued graphons \cite[Lemma 6.1]{Chuangpishit_2015}. This continuity, however, does not hold when the triangular cut is applied to $L^p$-graphons with $p<\infty$ (see \cite{MISHURA202226}). As a result, the proof in \cite{Chuangpishit_2015} do not work for the unbounded case. 
Similarly, the proof of stability of $\Gamma$ in \cite[Theorem 3.2]{ghandehari2020graph} heavily relies on the fact that the graphons at hand do not attain values outside of $[-1,1]$ (see Remark~\ref{remark:teddy} for more details).  
} 
Thus, to  handle $L^p$-graphons, the new approach presented here seemed to be necessary. 
%

This paper is organized as follows: In Section \ref{sec:notationbg}, we introduce various necessary background involving graphons and the study of the Robinson property. Section \ref{sec:previousparam} discusses previous work that has been done to recover Robinson graphons in the cut-norm; here, we also comparatively summarize our own graphon recovery results. In Section \ref{sec:lambda}, alongside the building of necessary machinery and statement of technical lemmas, we present proofs of our main recovery results. The aforementioned technical lemmas are proven in Appendix \ref{section:proofs of lemmas}. 

\section{Notation and background}\label{sec:notationbg}
In this section of the paper, we present necessary background and notation for the reader. We begin with basic notation used throughout the paper; afterwards, we present graph limit theory, stating required definitions alongside notable results, then transition into explanation of the Robinson property for both matrices and graphons. 

All functions and sets discussed in this paper are assumed to be measurable. The symmetric difference between two sets $A$ and $B$ is denoted $A \triangle B$, and the measure of $A$ is denoted $|A|$. The notation $\|\cdot\|_p$ is used to represent the standard $p$-norm; that is, for a function $f:X \to \R$ on a measure space $X$, and $1 \leq p < \infty$,
\begin{equation*}
    \|f\|_p := \left(\int_X |f|^p\right)^{\frac{1}{p}} \mbox{ and } \|f\|_{\infty} := \inf_{a \in \R}\left\{|f(x)| \leq a \mbox{ almost everywhere (a.e.)}\right\}.
\end{equation*}
\subsection{Graphons and the cut-norm}
We begin by defining the graphon, the fundamental building block of graph limit theory, in several of its myriad forms.
\begin{definition}[Graphon space]
\begin{itemize}
    \item[]
    \item Let $\W_0$ be the set of all symmetric, measurable functions $w:[0,1]^2 \to [0,1]$. The elements of this set are called \textit{graphons}.

    \item Let $\W^{\infty}$ be the set of all bounded, symmetric, measurable functions $w:[0,1]^2 \to \R$. The elements of this set are called \textit{kernels}.

    \item For $p\geq 1$, let $\W^p$ be the set of all symmetric, measurable functions $w:[0,1]^2 \to \R$ such that $\|w\|_p < \infty$. The elements of this set are called $L^p$-\textit{graphons}.
\end{itemize}
\end{definition}
It is clear that $\W_0 \subseteq \W^{\infty} \subseteq \W^p \subseteq \W^1$ for $p > 1$. We mention as well the space of \emph{step kernels} $\mathcal{S} \subseteq \W^{\infty}$, which comprises the set of symmetric step functions on $[0,1]^2$; that is, for $w \in \mathcal{S}$, there exists some partition $\mathcal{P}= \{P_1,\ldots,P_m\}$ of $[0,1]$ into measurable subsets such that $w$ is constant on $P_i \times P_j$ for all $1 \leq i,j \leq m$. We define the space of step graphons $\mathcal{S}_0 \subseteq \W_0$ similarly. The norm of choice for these function spaces is the \textit{cut-norm}, which was introduced in \cite{friezekannan} for matrices, and can  be naturally extended to the case of graphons as follows.
\begin{definition}[Cut-norm]
Let $w \in \W^1$. We define the \textit{cut-norm} by
\begin{equation*}
    \|w\|_{\square} = \sup_{S,T \subseteq [0,1]}\Bigg|\iint_{S \times T}w(x,y)\, dxdy\Bigg|
\end{equation*}
where the supremum is taken over all measurable subsets $S$ and $T$. Moreover, the supremum in the formula for cut-norm is achieved \cite[Lemma 8.10]{lovaszbook}.
\end{definition}
\noindent It is obvious that for any function $w \in \W^1$, and for all $p \geq 1$,
\begin{equation*}
    \|w\|_{\square} \leq \|w\|_{1} \leq \|w\|_{p} \leq \|w\|_{\infty}.
\end{equation*}
Similar to unlabeled graphs, we are interested in `unlabeled graphons'; these are defined through an analytic generalization of graph vertex relabelling.
Let $\Phi$ denote the set of all measure-preserving bijections on $[0,1]$; that is, if $A \subset [0,1]$ and $\phi\in \Phi$, then $|\phi(A)|=|\phi^{-1}(A)| = |A|$. The \textit{cut-distance} between two graphons $u$ and $w$ is
\begin{equation*}
    \delta_{\square}(u,w) = \inf_{\phi \in \Phi}\|u-w^{\phi}\|_{\square},
\end{equation*}
where $w^{\phi}(x,y) = w(\phi(x),\phi(y)).$ The cut-distance $\delta_{\square}$ is only a pseudometric; we thus identify graphons of cut distance 0 to get the space $\widetilde{\W}_0$ of \textit{unlabeled graphons}. $\widetilde{\W}$ and $\widetilde{\W}^p$ are defined similarly. This has quite the reward: For $p > 1$, the metric space $(\widetilde{W}^p,\delta_{\Box})$ has a compact unit ball (\cite[Theorem 2.13]{Borgs_2019}). For $p=1$, additional conditions are required to ensure compactness (\cite[Theorem C7]{Borgs_2019}), though we do not consider this case in the paper.

Let $w \in \W^1$ and let $A,B \subseteq [0,1]$. The \textit{cell average} of $w$ over $A \times B$ is given by 
\begin{equation*}
\overline{w}(A\times B) = \frac{1}{|A \times B|}\iint_{A\times B}w~dxdy.
\end{equation*}
For $w \in \W^1$ and a partition $\mathcal{P} = (S_1,...,S_k)$ of $[0,1]$ into measurable sets, we define the function $w_{\mathcal{P}}$ by
\begin{equation*}
    w_{\mathcal{P}}(s,t) = \frac{1}{|S_i\times S_j|}\iint_{S_i \times S_j}w(x,y)\, dxdy = \overline{w}(S_i\times S_j)\quad \text{if } (s,t) \in S_i\times S_j,
\end{equation*}
The operator $w \mapsto w_{\mathcal{P}}$ is called the \textit{stepping operator}. It is easy to see that the stepping operator is contractive with respect to the cut-norm as well as $L^p$-norms ; indeed, for $w \in \W^1$, a partition $\mathcal{P}$ of $[0,1]$, and $p \geq 1$, we have
\begin{equation*}
    \|w_{\mathcal{P}}\|_p \leq \|w\|_p ~\text{ and } ~\|w_{\mathcal{P}}\|_{\square} \leq \|w\|_{\square}.
\end{equation*}

\subsection{The Robinson property}\label{subsec-robinson}
The Robinson property for matrices was first introduced in \cite{robinson_1951} for the study of the classical seriation problem, whose objective is to order a set of items so that similar items are placed close to one another. A symmetric matrix $A=[a_{ij}]$ is a \emph{Robinson matrix} if
\begin{equation*}
   i \leq j \leq k \implies a_{ik} \leq \min\{a_{ij},a_{jk}\}.
\end{equation*}  
and is \textit{Robinsonian} if it becomes a Robinson matrix after simultaneous application of a permutation $\pi$ to its rows and columns. The permutation $\pi$ is a \textit{Robinson ordering} of $A$. If the entries $a_{ij}$ of the symmetric matrix $A$ represent similarity of items $i$ and $j$, then the Robinson ordering of $A$ represents a linear arrangement of the items so that similar items are placed closer together. 

The Robinson property for graphons, initially introduced in \cite{Chuangpishit_2015}, shares a similar definition. 
When analyzing large matrices or graphs, particularly in the context of network visualization, it is useful to determine whether these complex structures can be interpreted as samples of a Robinson graphon $w$. 
If so, one can harness the Robinson property inherent in the graphon $w$ to obtain quantitative insights concerning the data at hand. 
Formally, an $L^p$-graphon $w \in \W^p$ is said to be \emph{Robinson} if 
\begin{equation*}
   x \leq y \leq z \implies w(x,z) \leq \min\{w(x,y),w(y,z)\}.
\end{equation*}  
We call an $L^p$-graphon \textit{Robinson almost everywhere}, or Robinson a.e.~for short, if it is equal a.e.~to a Robinson $L^p$-graphon. 
An  $L^p$-graphon $w \in \W^p$ is called \textit{Robinsonian} if there exists a Robinson $L^p$-graphon $u \in \W^p$ such that $\delta_{\Box}(w,u) = 0$. 

\section{Robinson property: measurement and recovery}\label{sec:previousparam}
In order to study the recovery problem for Robinson matrices/graphons, we need to devise a graphon parameter that suitably measures the Robinson property. Such a parameter must \emph{recognize} Robinson graphons, be \emph{continuous} on the space of graphons, and \emph{recover} the Robinson property. 
These three key features ensure that the graphon parameter recognizes graphs sampled from Robinson graphons as well.
Informally speaking, these are graphs whose parameter value is small, and we think of them as `almost Robinson graphs'. 

We  give a brief overview of past results from \cite{Chuangpishit_2015} introducing such a parameter for $L^\infty$-graphons, before discussing efforts to tackle this problem in the world of $L^p$-graphons.
The parameter $\Gamma:\W_0 \to [0,1]$ was introduced in \cite{Chuangpishit_2015} to estimate to what extent a given graphon fails to satisfy the Robinson property. We recall the definition of \IN{$\Gamma$}, noting that it can naturally extend  to $\W^1$. For $w \in \W^1$ and a measurable subset $A$ of $[0,1]$, we define $\Gamma(w,A)$ as
\begin{equation*}
    \Gamma(w,A) =\iint_{y<z}\left[\int_{x \in A \cap [0,y]} (w(x,z)-w(x,y))dx\right]_+dydz+ \iint_{y<z}\left[\int_{x \in A \cap [z,1]} (w(x,y)-w(x,z))dx\right]_+dydz,
\end{equation*}
where $[x]_+ = \max(x,0)$. We then define $\Gamma(w) := \sup_{A}\Gamma(w,A)$, where the supremum is taken over all  measurable subsets of $[0,1]$. It turns out that $\Gamma$ suitably measures the Robinson property for $L^\infty$-graphons:
\begin{itemize}
 \item[(i)] \label{item:gamma_one}({\bf Recognition} \cite[Proposition 4.2]{Chuangpishit_2015}) $w \in \W_0$ is Robinson a.e. if and only if $\Gamma(w)=0$.
 \item[(ii)] \label{item:gamma_two} ({\bf Continuity} \cite[Lemma 6.2]{Chuangpishit_2015}) $\Gamma$ is continuous on $\W_0$ with respect to cut-norm.
 \item[(iii)] \label{item:gamma_three}{(\bf Recovery} \cite[Theorem 3.2]{ghandehari2020graph})
For every $w \in \W_0$, there exists a Robinson graphon $u \in \W_0$ satisfying
\begin{equation*}
    \|u-w\|_{\Box} \leq 14\Gamma(w)^{1/7}.
\end{equation*} 
\end{itemize}
Indeed, $\Gamma$ recognizes samples of Robinson graphons: These are precisely graph sequences whose $\Gamma$-values converge to 0 \cite[Theorem 1.2]{ghandehari2020graph}.
\begin{remark}\label{remark:teddy}
\mahya{It is natural to ask whether these results for $\Gamma$ can be extended to $L^p$-graphons; 
however, in trying to do so, one encounters key issues in proving continuity \hyperref[item:gamma_two]{(ii)} and recovery \hyperref[item:gamma_three]{(iii)} of $\Gamma$.
Firstly, if $\chi$ denotes the characteristic function of the triangle above the diagonal, then the proof of \hyperref[item:gamma_two]{(ii)} is based on the following two facts:
\begin{itemize}
    \item[(a)]\label{item:gamma_a} $|\Gamma(w_1)-\Gamma(w_2)|\leq 2\|w_1-w_2\|_{\Box} + 2\|(w_1-w_2)\chi\|_{\Box}$ for $w_1,w_2 \in \W^1$.
    \item[(b)]\label{item:gamma_b} $\|w\chi\|_\Box\leq 2\sqrt{\|w\|_\Box\|w\|_\infty}$ for $w \in \W^{\infty}.$
\end{itemize}
While condition \hyperref[item:gamma_a]{(a)} holds in the sparse case, it was shown in \cite{MISHURA202226} that condition \hyperref[item:gamma_b]{(b)} cannot be generalized to $w\in \mathcal{W}^p$ with $1< p<\infty$. 
In fact, the map $M_{\chi}: w \mapsto w\chi$ is not $\|\cdot\|_{\Box}$-continuous on $\W^p$ for $1 < p < \infty$ \cite[Corollary 3.4]{MISHURA202226}. So even though the definition of \IN{$\Gamma$} can be extended to \IN{$L^p$-graphon}s, 
the proof of continuity in \cite{Chuangpishit_2015} does not naturally extend to the unbounded case. Whether $\Gamma$ is cut-norm continuous on $\W^p$ remains an open question.}

\mahya{Secondly, the proof of stability of $\Gamma$ in \cite{Ghanehari-Janssen-Kalyaniwalla} relies heavily, and at multiple points, on the universal $L^\infty$-bound on graphons. 
Most importantly, the key idea of the proof is to locate a large enough cell on which the graphon has a large enough average (\cite[Claim 4.11]{Ghanehari-Janssen-Kalyaniwalla}), assuming that the desired upper bound on the error of estimate fails. The existence of such a cell, which is an integral part of the proof, is not guaranteed when the graphon is not $L^\infty$-bounded.
Indeed, for any $1\leq p <\infty$, it is easy to construct a graphon $w$ with $\|w\|_p=1$ for which such a cell does not exist.}
\end{remark}
%
\section{Robinson recovery of $L^p$-graphons}\label{sec:lambda}
In this section, we introduce a new graphon parameter $\Lambda$ for measuring the Robinson property. We say that $A \leq B$ for sets $A,B \subseteq[0,1]$ if $a \leq b$ for all $a \in A$ and $b \in B$. 
\begin{definition}[Robinson parameter]\label{def:lambda}
    Let $w \in \mathcal{W}^1$. Define
\begin{equation}\label{eq:lambda}
    \Lambda(w) = \frac{1}{2}\sup_{\substack{A \leq B \leq C, \\ |A|=|B|=|C|}}\bigg[\iint_{A\times C}w~dxdy-\iint_{B\times C} w~dxdy\bigg]\nonumber\\
    + \frac{1}{2}\sup_{\substack{X \leq Y \leq Z, \\ |X|=|Y|=|Z|}}\bigg[\iint_{X\times Z}w~dxdy-\iint_{X\times Y} w~dxdy\bigg]
\end{equation}
where $A,B,C$ and $X,Y,Z$ are measurable subsets of $[0,1]$. 
\end{definition}
It is clear that we have $\Lambda(w) \geq 0$ for all  $w\in \W^1$, as we can take $A=B=C=X=Y=Z=\emptyset$.  Moreover, 
\begin{equation*}
\Lambda(w)\leq \frac{1}{2}\Bigg(\iint_{(A\cup B)\times C}|w|\, dxdy +\iint_{X\times (Y\cup Z)}|w|\, dxdy\Bigg),
\end{equation*}
implying that $\Lambda(w) \leq \|w\|_p$ for every $p\geq 1$. 
Note that since the supremum is subadditive, we also have that $\Lambda$ is subadditive, i.e., for all $u,w \in \W^p$
\begin{equation}\label{eq:Lambda-prop}
    \Lambda(u+w) \leq \Lambda(u) + \Lambda(w).
\end{equation}
The rest of this section is dedicated to proving that $\Lambda$ suitably measures the Robinson property for $L^p$-graphons. We thus show that $\Lambda$ recognizes the Robinson property, is continuous on $L^p$-graphons, and can recover the Robinson property. We begin with the proof of recognition, noting that for ease of notation, the upper triangle of the unit square will be written
\begin{equation*}
\Delta=\left\{(x,y)\in[0,1]^2:\ x\leq y\right\}.
\end{equation*}
\begin{proposition}[Recognition]\label{prop:Lambda-prop}
Let $1\leq p\leq \infty$ and suppose $w \in \W^p$. Then $w$ is Robinson a.e.~if and only if $\Lambda(w)=0$.
\end{proposition}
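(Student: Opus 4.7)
The plan is to handle the two implications separately.

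For the direction that $w$ Robinson a.e.\ implies $\Lambda(w)=0$, I would first replace $w$ by its Robinson representative, since $\Lambda$ depends only on integrals of $w$. Given admissible sets $A\le B\le C$ with $|A|=|B|=|C|$, I would use the monotone measure-preserving bijection $\phi:A\to B$ obtained from the inverse CDFs of the normalized Lebesgue measures on $A$ and $B$. Because $A\le B$, one has $\phi(a)\ge a$ for a.e.\ $a$, so for every $c\in C$ we have $a\le \phi(a)\le c$; the Robinson inequality then yields $w(a,c)\le w(\phi(a),c)$, and integrating and changing variables gives $\iint_{A\times C}w\le\iint_{B\times C}w$. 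Since the empty triple witnesses the value $0$, the first supremum in $\Lambda(w)$ is exactly $0$, and a symmetric argument handles the second.

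For the reverse direction, assume $\Lambda(w)=0$. Both suprema are nonnegative (take empty sets) and sum to $0$, so both vanish; in particular $\iint_{A\times C}w\le\iint_{B\times C}w$ for every admissible triple. I would specialize to intervals $A=[a,a+h]$, $B=[b,b+h]$, $C=[c,c+h]$ with $a+h\le b$ and $b+h\le c$, divide by $h^2$, and let $h\to 0^+$. The Lebesgue differentiation theorem on $\R^2$, together with Fubini to handle the three simultaneous variables, yields $w(a,c)\le w(b,c)$ for almost every triple $(a,b,c)$ with $a<b<c$. Running the same argument on the second supremum gives $w(x,z)\le w(x,y)$ a.e.\ on $\{x<y<z\}$, so $w(x,z)\le\min\{w(x,y),w(y,z)\}$ for almost every $(x,y,z)$ in the upper triangle.

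To finish, I would upgrade this a.e.\ condition to a pointwise Robinson modification of $w$. Define, for $x\le z$,
$$u(x,z):=\limsup_{h\to 0^+}\overline{w}\bigl([x,x+h]\times[z-h,z]\bigr),$$
and extend $u$ symmetrically across the diagonal. Lebesgue differentiation gives $u=w$ a.e. By Fubini applied to the inequalities derived above, for almost every $z$ the slice $w(\cdot,z)$ agrees a.e.\ with a non-decreasing function on $[0,z]$; the averaging in the definition of $u$ then forces $u(\cdot,z)$ to be pointwise non-decreasing on $[0,z]$, and analogously $u(x,\cdot)$ to be pointwise non-increasing on $[x,1]$. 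Hence $u$ is a Robinson graphon with $u=w$ a.e., as required.

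The main obstacle I expect is the final step: translating a three-variable a.e.\ Robinson inequality into a genuinely pointwise Robinson two-variable modification requires careful bookkeeping of null sets and measurability of the candidate $u$. The rest of the argument is a clean combination of monotone measure-preserving rearrangements (for the easy direction) and Lebesgue differentiation (for the hard direction).
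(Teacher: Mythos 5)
Your proof is essentially correct, but it takes a genuinely different route from the paper's. The paper approaches the hard direction via step approximations: letting $w_n = w_{\mathcal{P}_n}$ for the equipartition $\mathcal{P}_n$ into $n$ intervals, it applies the $\Lambda(w)=0$ inequalities directly to the cells $I_i$ to conclude each $w_n$ is (a.e.) Robinson, then uses $L^1$-convergence plus Borel--Cantelli to pass to a pointwise a.e.\ limit along a subsequence, and finally modifies $w$ on the exceptional null set $N$ by taking $\widetilde{w}(x,y) = \sup\{w(u,v) : (u,v)\in \UL(x,y)\setminus N\}$ on $N$. Your argument instead applies Lebesgue differentiation with corner squares to the integral inequalities, gets the pointwise Robinson inequality a.e.\ on the open triangle $\{x<y<z\}$, and then produces the Robinson representative via a $\limsup$ of local averages. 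Both routes are sound; yours is more analytic, the paper's more combinatorial. One feature of the paper's route is that it never needs a regular-family version of Lebesgue differentiation (corner cubes rather than centered balls), nor the intermediate fact that ``$f(a)\le f(b)$ for a.e.\ $a<b$'' forces $f$ to agree a.e.\ with a monotone function; both of these are true but require a bit of care to justify. You can in fact sidestep the latter entirely: with $A=[x_1,x_1+h]$, $B=[x_2,x_2+h]$, $C=[z-h,z]$ and $h$ small enough that $A\le B\le C$, the $\Lambda(w)=0$ inequality gives
\[
\frac{1}{h^2}\iint_{A\times C}w \ \le\ \frac{1}{h^2}\iint_{B\times C}w
\]
directly, so taking $\limsup_{h\to 0^+}$ yields $u(x_1,z)\le u(x_2,z)$ for all $x_1\le x_2 < z$ without any detour through a.e.\ slice-monotonicity or Fubini. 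Your proof of the easy direction via a monotone measure-preserving rearrangement is fine and more explicit than the paper's, which simply asserts it. Finally, both your $u$ and the paper's $\widetilde{w}$ need a word about the diagonal cases $x=y$ or $y=z$ (the sets $A,B$ there fail $A\le B$ and the LDT squares straddle the diagonal), and your $\limsup$ should be taken along a countable sequence $h_n\downarrow 0$ to guarantee measurability of $u$; these are fixable technicalities.
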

\begin{proof}
Observe that if $w$ is Robinson a.e., then $\Lambda(w) = 0$. To prove the reverse direction, suppose $\Lambda(w) = 0$. For $n \in \mathbb{N}$, let $w_n = w_{\mathcal{P}_n}$, where $\mathcal{P}_n$ is the partition of $[0,1]$ into $n$ equal-size intervals $I_1\leq I_2\leq \ldots \leq I_n$. Note that as $\Lambda(w) = 0,$ for any choice of measurable subsets $A \leq B \leq C$ of $[0,1]$ of equal size, we have%
\begin{equation*}
    \iint_{A \times C} w(x,y)\, dxdy \leq \iint_{B \times C}w(x,y)\, dxdy \ \mbox{ and }\  \iint_{A \times C} w(x,y)\, dxdy \leq \iint_{A \times B}w(x,y)\, dxdy.
\end{equation*}
Applying the above inequalities to the sets $I_i$, we observe that $w_n$ is Robinson. Since $w_n\to w$ in the $L^1$ norm, by using the Borel--Cantelli Lemma and going down to a subsequence if necessary, we can assume that $\{w_n\}$ converges to $w$ pointwise a.e.~in $[0,1]^2$. So, there exists a set of measure zero $N \subset [0,1]^2$ such that  $w(x,y) = \lim_{n}w_n(x,y)$ for $(x,y) \in [0,1]^2\setminus N$. As $w$ is symmetric, we can assume that $N$ is symmetric with respect to the main diagonal. Next, we define the symmetric function $\widetilde{w}$ as follows:
\begin{equation*}
    \widetilde{w}(x,y) = \begin{cases}
    w(x,y) & (x,y) \in \Delta \setminus N\\
    \sup\left\{w(u,v):\ (u,v) \in ([0,x]\times[y,1])\setminus N\right\} & (x,y) \in \Delta\cap N
    \end{cases}.
\end{equation*}
Clearly $w = \widetilde{w}$ almost everywhere. To show $\widetilde{w}$ is Robinson, we observe that for every $(x_1,y_1), (x_2,y_2)\in \Delta$ with $x_2\leq x_1$ and $y_1\leq y_2$, we  have $\widetilde{w}(x_2,y_2) \leq \widetilde{w}(x_1,y_1)$. Indeed, the required inequality can be easily verified by considering four cases depending on whether each of $(x_1,y_1)$ and $(x_2,y_2)$ belong to $\Delta\setminus N$ or $\Delta\cap N$; we omit the details of this straightforward verification.
\end{proof}
We now show that $\Lambda$ is continuous on $L^p$-graphons.
\begin{proposition}[Continuity]
$|\Lambda(w)-\Lambda(u)|\leq 2\|w-u\|_{\Box}$. \label{item:Lambda-cts}
\end{proposition}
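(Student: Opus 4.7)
The plan is to exploit subadditivity of the supremum together with the basic fact that, for \emph{any} measurable rectangle $S \times T$, one has $\left|\iint_{S \times T}(w-u)\,dxdy\right| \leq \|w-u\|_\Box$. Since $\Lambda$ is defined as a sum of two suprema, each over a difference of two rectangular integrals, we only need to estimate each summand by at most $\|w-u\|_\Box$.

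First, I would fix an admissible triple $A \leq B \leq C$ with $|A|=|B|=|C|$ and write
\begin{align*}
\iint_{A\times C}w\,dxdy - \iint_{B\times C}w\,dxdy
&= \Bigl[\iint_{A\times C}u\,dxdy - \iint_{B\times C}u\,dxdy\Bigr] \\
&\quad + \iint_{A\times C}(w-u)\,dxdy - \iint_{B\times C}(w-u)\,dxdy.
\end{align*}
Since $A\times C$ and $B\times C$ are both measurable rectangles, the last two integrals are each at most $\|w-u\|_\Box$ in absolute value, so the last line is bounded above by $2\|w-u\|_\Box$. Taking the supremum over all admissible $(A,B,C)$ yields
\[
\sup_{A\leq B\leq C,\,|A|=|B|=|C|}\Bigl[\iint_{A\times C}w - \iint_{B\times C}w\Bigr] \leq \sup_{A\leq B\leq C,\,|A|=|B|=|C|}\Bigl[\iint_{A\times C}u - \iint_{B\times C}u\Bigr] + 2\|w-u\|_\Box.
\]
The same argument applied to the $(X,Y,Z)$-supremum, with the rectangles $X\times Z$ and $X\times Y$, gives an analogous inequality. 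Adding the two and multiplying by $\tfrac{1}{2}$ (as in the definition of $\Lambda$) produces $\Lambda(w) \leq \Lambda(u) + 2\|w-u\|_\Box$. Swapping the roles of $w$ and $u$ gives the reverse inequality and hence the claim.

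There is essentially no obstacle here; the only point to verify is that the cut-norm of $w-u$ genuinely bounds each rectangular integral of $w-u$, which is immediate from the definition of $\|\cdot\|_\Box$ as a supremum over all measurable rectangles. I would present the argument compactly as a single chain of inequalities for one of the two suprema, then remark that the other is handled identically.
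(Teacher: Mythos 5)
Your proof is correct and takes essentially the same approach as the paper: both decompose the integrals of $w$ into integrals of $u$ plus integrals of $w-u$, bound each rectangular integral of $w-u$ by $\|w-u\|_\Box$, and collect a total error of $2\|w-u\|_\Box$ (the paper extracts $\epsilon$-near-optimal sets and lets $\epsilon\to0$ rather than invoking subadditivity of $\sup$ directly, but this is a stylistic difference only).
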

\begin{proof}
Let $w,u \in \mathcal{W}^1$, and fix $\epsilon > 0$. There exist measurable sets $A\leq B\leq C$ with equal size and measurable sets $X\leq Y\leq Z$ with equal size such that
\begin{equation}\label{eq:lambdaepbound}
\frac{1}{2}\bigg(\iint_{A\times C}w~dxdy-\iint_{B\times C}w~dxdy + \iint_{X\times Z}w~dxdy-\iint_{X\times Y} w~dxdy\bigg) \geq \Lambda(w)-\epsilon.
\end{equation}
Combining \eqref{eq:lambdaepbound} with the definition of $\Lambda(u)$, we get that
\begin{eqnarray*}
    2(\Lambda(w)-\Lambda(u)-\epsilon) \leq
    \iint_{A \times C}(w-u) + \iint_{B \times C}(u-w) 
  + \iint_{X\times Z} (w-u)+\iint_{X \times Y}(u-w)
    \leq 4\|w-u\|_{\square}.
\end{eqnarray*}
Following the same logic, we can also prove that $\Lambda(u)-\Lambda(w)\leq 2\|w-u\|_{\square}+2\epsilon$. Taking $\epsilon \to 0$, we get $|\Lambda(w)-\Lambda(u)| \leq 2\|w-u\|_{\square}.$ 
\end{proof}
Having shown that $\Lambda$ recognizes the Robinson property and is continuous on $L^p$-graphons, all that remains is to show it recovers the Robinson property; that is, we endeavour to show that for a given $L^p$-graphon $w$, there exists some Robinson $L^p$-graphon $u$ such that $\|w-u\|_{\Box}$ is bounded above by some power of $\Lambda(w)$. This is a far more challenging task than the previous two properties and must be handled in several parts. We begin by first defining this Robinson graphon $u$ and listing some properties that will be useful in future proofs.
\subsection{Robinson approximation of $L^p$-graphons}\label{subse:region}
\begin{figure}[t]
    \centering
    \begin{minipage}{0.4\textwidth}
    \includegraphics[width=\textwidth]{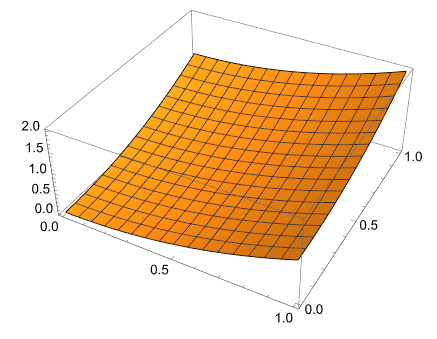}
    \end{minipage}
    \begin{minipage}{0.4\textwidth}
    \includegraphics[width=\textwidth]{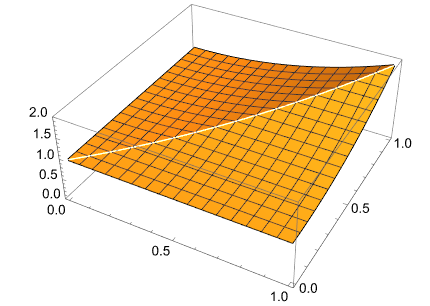}
    \end{minipage}
     \caption{\mahya{The graphon $w(x,y)=x^2+y^2$ (left) and its Robinson approximation $R_w^{\alpha}$ for $\alpha =0.05$ (right).}}
     \label{fig:robinson_example}
\end{figure}
Here we introduce the $\alpha$-Robinson approximation of an $L^p$-graphon. We borrow the following terminology from \cite{ghandehari2020graph}. The \textit{upper left (UL)} and \textit{lower right (LR)} regions of a given point $(a,b) \in \Delta$ are given by
\begin{align*}
\UL(a,b)&=[0,a]\times[b,1],\\
\LR(a,b)&=[a,b]\times[a,b]\cap \Delta.
\end{align*}
\begin{definition}[$\alpha$-Robinson approximation for graphons]\label{def:R(w)} 
Let $p \geq 1$ and fix a parameter $0<\alpha<1$. Given $w\in {\mathcal W}^p$, the $\alpha$-\emph{Robinson approximation} $R_w^{\alpha}$ of $w$ is the $L^p$-graphon such that for all $(x,y)\in \Delta$,
\begin{equation}
R_w^{\alpha}(x,y)=\sup\left\{ \overline{w}(S\times T)\,:\, S\times T\subseteq \UL(x,y),\, |S|=|T|=\alpha\right\},
\end{equation}
where $S,T$ are measurable and $\sup\emptyset =0$. Moreover, we set $R_w^{\alpha}=w$ if $\alpha=0$ and $w$ is Robinson.
\end{definition}
\mahya{For every $w\in \W^p$, the  approximation  $R_w^{\alpha}$ is a Robinson graphon. To see this, note that for two points $(x_1,y_1)$ and $(x_2,y_2)$, if $(x_1,y_1) \in \UL(x_2,y_2)$, then $\UL(x_1,y_1) \subseteq \UL(x_2,y_2)$.
So by definition of $R_w^{\alpha}$, we get
$R_w^{\alpha}(x_1,y_1) \leq R_w^{\alpha}(x_2,y_2)$.
Therefore, $R_w^{\alpha}$ satisfies the Robinson property (Subsection~\ref{subsec-robinson}), as 
for $x\leq y\leq z$, we have $(x,z)\in \UL(x,y)\cap \UL(y,z)$.
See Figure \ref{fig:robinson_example} for an example of a graphon $w$ and its Robinson approximation $R_w^{\alpha}$.}
\begin{remark}
If $w$ is a Robinson graphon and $0<\alpha<1$, then the graphon $R_w^\alpha$ can  be different from $w$, although both graphons are Robinson.
This is indeed the case for Robinson step graphons. In general, $w$ does not need to be continuous, but $R_w^\alpha$ satisfies certain continuity properties: Though $R_w^{\alpha}$ is certainly not always continuous on $[0,1]^2$--the outer boundary of thickness $\alpha$ of the unit square is set to 0 by definition--it is continuous on $[\alpha,1-\alpha]^2$. We prove this fact and some other useful properties of $R_w^{\alpha}$ in Proposition \ref{prop:rw-properties}.
\end{remark}
\begin{proposition}[Properties of the Robinson approximation]\label{prop:rw-properties}
Let $p \geq 1$, and fix a parameter $0<\alpha<1$.
Given graphons $w,u\in {\mathcal W}^p$  , we have:
\begin{enumerate}
    \item[(i)] If  $u \leq w$ pointwise, then $0 \leq R_w^{\alpha}-R_u^{\alpha} \leq R_{w-u}^{\alpha}.$
    
    \item[(ii)] $R_w^{\alpha}$ is continuous on the square $[\alpha,1-\alpha]^2$, and $\|R_w^{\alpha}\|_{\infty} \leq \alpha^{-\frac{2}{p}}\|w\|_p$.

    \item[(iii)] For $w,u\in \W^p$, we have $\|R_w^\alpha-R_u^\alpha\|_\infty\leq \|w-u\|_\Box$. In particular,
    $\|R_w^\alpha-R_u^\alpha\|_\Box\leq \|w-u\|_\Box$.
\end{enumerate}
We note that for item $(iii)$, only convergence in the $L^1$-norm---the weakest $L^p$-norm---is required. Therefore, convergence in any $L^p$-norm is enough to guarantee the convergence of the $\alpha$-Robinson approximations in the cut-norm.
\end{proposition}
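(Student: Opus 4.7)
The plan is to prove the three items sequentially, with the continuity assertion in (ii) as the main technical obstacle and items (i) and (iii) following from direct comparisons of near-optimal suprema. For (i), the inequality $R_u^\alpha \leq R_w^\alpha$ is immediate from pointwise monotonicity of the cell average. For the bound $R_w^\alpha - R_u^\alpha \leq R_{w-u}^\alpha$, I would fix $(x, y) \in \Delta$ and $\epsilon > 0$, pick $S \times T \subseteq \UL(x, y)$ with $|S|=|T|=\alpha$ such that $\overline{w}(S \times T) > R_w^\alpha(x, y) - \epsilon$, and decompose
\[\overline{w}(S \times T) = \overline{u}(S \times T) + \overline{(w-u)}(S \times T) \leq R_u^\alpha(x, y) + R_{w-u}^\alpha(x, y),\]
with $\epsilon \to 0$ completing the argument. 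For (iii), an analogous near-optimal choice for $R_w^\alpha(x, y)$ gives
\[R_w^\alpha(x, y) - R_u^\alpha(x, y) - \epsilon \leq \overline{w}(S \times T) - \overline{u}(S \times T) = \alpha^{-2}\iint_{S \times T}(w - u) \leq \alpha^{-2}\|w - u\|_{\Box};\]
swapping the roles of $w$ and $u$ and letting $\epsilon \to 0$ yields the claimed $L^\infty$-bound, and the cut-norm bound then follows from $\|\cdot\|_{\Box} \leq \|\cdot\|_\infty$ on the unit square.

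For (ii), the $L^\infty$-bound is an immediate consequence of Hölder's inequality with conjugate exponents $p$ and $q$: for $|S|=|T|=\alpha$,
\[|\overline{w}(S \times T)| \leq \alpha^{-2}\|w\|_p (\alpha^2)^{1/q} = \alpha^{-2/p}\|w\|_p.\]
For continuity on $[\alpha, 1-\alpha]^2$, fix $(x_0, y_0) \in [\alpha, 1-\alpha]^2 \cap \Delta$ and consider $(x, y) \to (x_0, y_0)$. The key idea is that a near-optimal $S \times T \subseteq \UL(x_0, y_0)$ can be modified into $S' \times T' \subseteq \UL(x, y)$ with $|S'|=|T'|=\alpha$ and $|S \triangle S'|, |T \triangle T'| = O(|x - x_0| + |y - y_0|)$: the margins $x_0 \geq \alpha$ and $1 - y_0 \geq \alpha$ leave room to refill any measure lost when $S, T$ are cut off by the new boundaries at $x$ and $y$. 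The difference in cell averages is then controlled by
\[|\overline{w}(S \times T) - \overline{w}(S' \times T')| \leq \alpha^{-2}\left(\iint_{(S \triangle S') \times T}|w| + \iint_{S' \times (T \triangle T')}|w|\right),\]
which tends to $0$ as $(x, y) \to (x_0, y_0)$ by absolute continuity of the Lebesgue integral for $w \in L^1$. Interchanging the roles of $(x_0, y_0)$ and $(x, y)$ gives the full continuity statement.

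The principal difficulty is the continuity step in (ii): constructing the perturbed sets $S', T'$ measurably with exact size $\alpha$ and small symmetric difference from $S, T$, and controlling the integrals $\iint_{(S \triangle S') \times T}|w|$ and $\iint_{S' \times (T \triangle T')}|w|$ without any pointwise bound on $w$. Avoiding an $L^\infty$-hypothesis on the graphon is essential for the $L^p$-setting; absolute continuity of the integral, automatic for $w \in L^p \subseteq L^1$, is precisely what makes the continuity argument work in the unbounded case.
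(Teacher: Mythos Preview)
Your arguments for (i), the norm bound in (ii), and (iii) are essentially identical to the paper's: near-optimal choices of $S\times T$ combined with the appropriate upper bound (pointwise monotonicity, H\"older, or the definition of the cut-norm). One caveat on (iii): the inequality you actually derive, $|R_w^\alpha(x,y)-R_u^\alpha(x,y)|\le \alpha^{-2}\|w-u\|_\Box$, carries the factor $\alpha^{-2}>1$ and so does not literally give the bound $\|w-u\|_\Box$ in the statement. The paper's own proof arrives at the same $\alpha^{-2}$-weighted estimate (written there as $\alpha^2$, an evident typo), so your computation matches what the paper in fact proves, even though neither argument yields the inequality exactly as stated.

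For continuity in (ii) the paper takes a different organizing route. Instead of perturbing a near-optimal $S\times T$ directly, it uses monotonicity of $R_w^\alpha$ in the $\UL$-direction to sandwich
\[
R_w^\alpha\bigl(\min(x_n,x),\max(y_n,y)\bigr)\ \le\ R_w^\alpha(x_n,y_n)\ \le\ R_w^\alpha\bigl(\max(x_n,x),\min(y_n,y)\bigr),
\]
and then shows both ends of the sandwich converge to $R_w^\alpha(x,y)$ by bounding the defect via $\alpha^{-2}\iint_S|w|$ over the symmetric difference $S$ of the relevant $\UL$ regions and invoking absolute continuity of the $L^1$ integral. Your direct set-perturbation argument is equally valid and is in fact more explicit about the ``refilling'' step (adjusting $S,T$ to $S',T'$ of exact measure $\alpha$ inside the shifted $\UL$ region), which the paper's squeeze proof also needs but glosses over. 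The squeeze formulation is a bit cleaner to state, since it reduces the two-sided estimate to one-sided comparisons at the corners of a small box around $(x,y)$; your approach has the virtue of making the measure-theoretic construction visible. Either way, the analytic core---absolute continuity of $\int|w|$ over sets of vanishing measure---is identical.
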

\begin{proof}
To prove (i), let $(x,y) \in \Delta$. By definition of the $\alpha$-Robinson approximation, for every $\epsilon > 0$, there exist sets $A,B \subseteq \UL(x,y)$ such that $R_w^{\alpha}(x,y) \geq \overline{w}(A\times B) \geq R_w^{\alpha}(x,y)-\epsilon$ and $C,D \subseteq \UL(x,y)$ such that $R_u^{\alpha}(x,y)\geq\overline{u}(C\times D) \geq R_u^{\alpha}(x,y)-\epsilon$. We also note that $|A|=|B|=|C|=|D|=\alpha$. Since $u\leq w$, we have
\begin{equation*}
    0 \leq \frac{1}{\alpha^2}\iint_{C\times D} (w - u)dxdy
   =\frac{1}{\alpha^2}\iint_{C\times D} w~dxdy-\frac{1}{\alpha^2}\iint_{C\times D} u~dxdy
   \leq R_w^{\alpha}(x,y)-R_{u}^{\alpha}(x,y)+\epsilon
\end{equation*}
Letting $\epsilon \to 0$, we conclude that $0 \leq R_w^{\alpha}-R_u^{\alpha}$. Moreover, we note that
\begin{equation*}
   R_w^{\alpha}(x,y)-\epsilon-R_{u}^{\alpha}(x,y)  \leq \frac{1}{\alpha^2}\iint_{A\times B} w~dxdy - \frac{1}{\alpha^2}\iint_{A \times B} u~dxdy 
= \frac{1}{\alpha^2}\iint_{A\times B} (w - u)dxdy 
\leq R_{w-u}^{\alpha}(x,y),
\end{equation*}
showing that (i) holds true, again by letting $\epsilon \to 0$. 

To prove (ii), let $\{(x_n,y_n)\} \subset [\alpha,1-\alpha]^2$ be a sequence converging to $(x,y) \in [\alpha,1-\alpha]^2$ in the standard Euclidean distance. Then, we have 
$\UL(\min(x_n,x),\max(y_n,y)) \subseteq \UL(x,y) \subseteq \UL(\max(x_n,x),\min(y_n,y)),$
which implies that
\begin{equation}\label{eq:rwbounds}
    R_w^{\alpha}(\min(x_n,x),\max(y_n,y)) \leq R_w^{\alpha}(x_n,y_n) \leq R_w^{\alpha}(\max(x_n,x),\min(y_n,y)).
\end{equation}
We shall show that both the upper and lower bound in \eqref{eq:rwbounds} converge to $R_w^{\alpha}(x,y)$; by the Squeeze Theorem, this will imply our original claim. To do so, we let $\epsilon > 0$ and recall from elementary measure theory that as $w$ is an $L^1$ function, for every $\epsilon > 0$, there exists a $\delta > 0$ such that for every $S \subset [\alpha,1-\alpha]^2$ satisfying $|S| < \delta$, we have $\iint_S |w|\, dxdy < \epsilon.$ Let $N \in \N$ be such that for all $n \geq N$, we have that $d((x_n,y_n),(x,y)) < \frac{\delta}{2}$. Defining the set $S := \UL(\min(x_n,x),\max(y_n,y))\triangle\UL(x,y)$, it must then be that 
\begin{equation*}
    R_w^{\alpha}(x,y) \leq R_w^{\alpha}(\min(x_n,x),\max(y_n,y))+\frac{1}{\alpha^2}\iint_{S}|w|dxdy \leq R_w^{\alpha}(\min(x_n,x),\max(y_n,y))+\frac{\epsilon}{\alpha^2}.
\end{equation*}
Note that by definition of $R_w^\alpha$, we have $R_w^{\alpha}(\min(x_n,x),\max(y_n,y)) \leq R_w^{\alpha}(x,y)$, so
\begin{equation*}
    |R_w^{\alpha}(\min(x_n,x),\max(y_n,y))-R_w^{\alpha}(x,y)| < \epsilon.
\end{equation*}
Let $\epsilon \to 0$, we conclude that the left hand side of \eqref{eq:rwbounds} converges to $R_w^{\alpha}(x,y).$ A similar argument shows that the the right hand side of \eqref{eq:rwbounds} converges to $R_w^{\alpha}(x,y)$ as well, and we are done.

To prove the norm bound, we note that for every $\epsilon > 0$, there exist sets $A,B \subseteq \UL(x,y)$ where $|A|=|B|=\alpha$ such that $R_w^{\alpha}(x,y) \geq \overline{w}(A\times B) \geq R_w^{\alpha}(x,y)-\epsilon$. Then, by H\"older's inequality,
\begin{equation*}
R_w^{\alpha}(x,y)-\epsilon \leq \frac{1}{\alpha^2}\iint_{A\times B} w~dxdy \leq \frac{1}{\alpha^2}\|w\|_p\|\mathbbm{1}_{A\times B}\|_q = \alpha^{-\frac{2}{p}}\|w\|_p,
\end{equation*}
where we used $\|\mathbbm{1}_{A\times B}\|_q=\alpha^{2/q}=\alpha^{2-2/p}.$ We let $\epsilon \to 0$ to finish the proof of (ii).

To prove (iii), fix $(x,y)\in \Delta$, and let $\epsilon>0$ be arbitrary. By an argument similar to (i), there exist sets $A,B$ of size $\alpha$ such that $A\times B\subseteq \UL(x,y)$, and 
$$R_w^{\alpha}(x,y)-R_u^\alpha(x,y)\leq \overline{w}(A\times B)-\overline{u}(A\times B)+\epsilon\leq \alpha^2\|w-u\|_\Box+\epsilon.$$
Sending  $\epsilon$ to 0, we get $R_w^{\alpha}(x,y)-R_u^\alpha(x,y)\leq \alpha^2\|w-u\|_\Box$. 
A similar argument can be used to show $R_u^{\alpha}(x,y)-R_w^\alpha(x,y)\leq \alpha^2\|w-u\|_\Box$, so we are done.
\end{proof}
%
\subsection{Recovering the Robinson property for $L^p$-graphons}
Our proof that $\Lambda$ recovers the Robinson property requires that the value of $R_{w}^{\alpha}$ and $\overline{w}$ both be tightly controlled within sets of specific measure. To accomplish this, we enhance the techniques outlined in \cite{ghandehari2020graph}, and as a result, we need to introduce analogous notation. Let $w \in \W^{\infty}$ be a  graphon with $w\geq 0 $ and $\lceil\|w\|_{\infty}\rceil=M$. Fix a parameter $\alpha\in (0,1)$ (as in Definition~\ref{def:R(w)}), and let $m$ be a fixed integer. For $k\in \{1,\ldots,mM-1\}$, define the $k$-th \emph{black region} $\B_k$, the $k$-th \emph{white region} $\V_k$ and the $k$-th \emph{grey region} $\G_k$ as follows. 
\begin{align*}
    \B_k&=\Big\{(x,y)\in \Delta:\ x=y\ \ \text{or} \ \  \exists \ S\times T\subseteq \UL(x,y) \  \text{with}\ |S|=|T|=\alpha \text{ and }   \overline{w}(S \times T)>\frac{k}{m}\Big\},\\
    \V_k&=\Big\{(x,y)\in \Delta \setminus {\B}_k:\ \exists \ S\times T\subseteq \LR(x,y)\  \text{with} \ |S|=|T|=\alpha \text{ and }  \overline{w}(S \times T)\leq \frac{k}{m}\Big\},\\
    \G_k&=\Delta\setminus (\B_k\cup\V_k).
\end{align*}
We set $\B_0=\V_{mM}=\Delta$, set $\V_0=\B_{mM}=\emptyset$, and denote $\G:=\bigcup_{k=1}^{mM-1} \G_k$. In addition, we also define the following regions:
\begin{align*}
    &\mathcal{R}_k:=\B_k\cap \V_{k+1}.
\end{align*}  
Black regions provide lower bounds on both $R_w^{\alpha}$ and $\overline{w}$ while white regions provide upper bounds. 
So the regions $\mathcal{R}_k$ are used to form tight bounds on the behaviour of both $R_w^{\alpha}$ and $\overline{w}$. 
Grey regions provide no information pertinent to our method of proof. \mahya{From the definition of the regions, it is easy to observe that 
$$\B_0\supseteq \B_1\supseteq\ldots\supseteq \B_{mM} \ \mbox{ and }\ \V_0\subseteq \V_1\subseteq\ldots\subseteq \V_{mM}.$$
Subsequently, we establish that the regions $\mathcal{R}_k$ form a partition of $\Delta\setminus \mathcal{G}$. Our approach mirrors that of \cite[Lemma 4.8]{ghandehari2020graph}, with a supplementary sketch of the proof provided for reader's convenience.} 
\begin{figure}
    \centering
    \begin{minipage}{0.4\textwidth}
    \includegraphics[width=\textwidth]{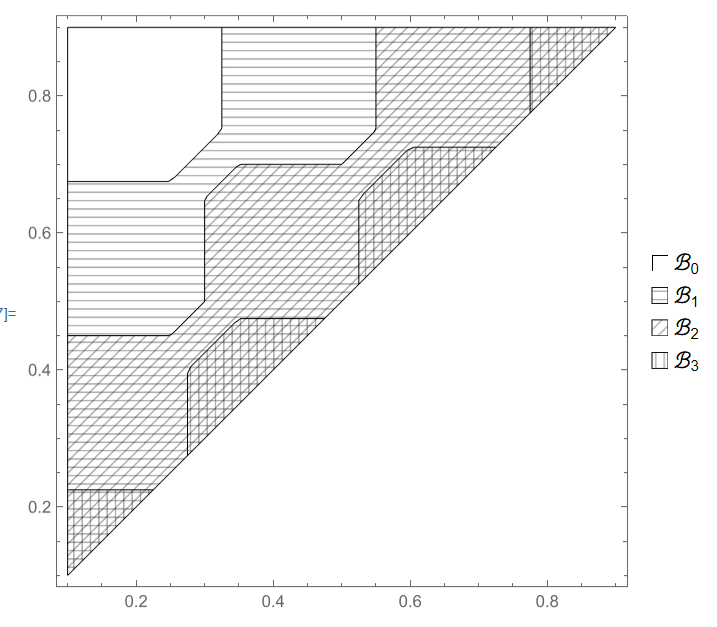}
    \end{minipage}
    \begin{minipage}{0.4\textwidth}
    \includegraphics[width=\textwidth]{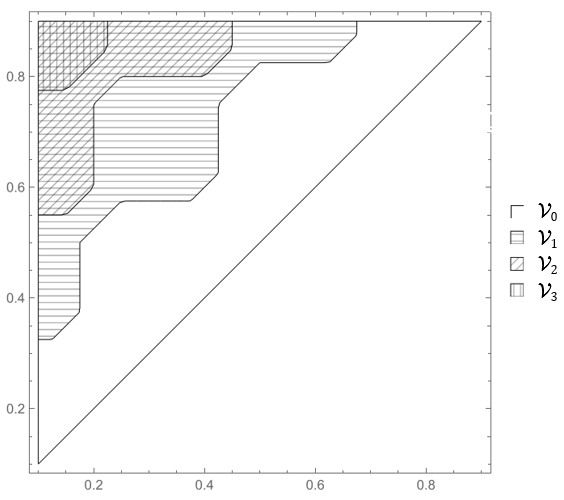}
    \end{minipage}
     \caption{An example of black and white regions for $m=4$ and $\alpha =0.1$.}
     \label{fig:bnwregions}
\end{figure}
\begin{lemma}\label{lem:rk-parts-delt/g}
Let $w \in {\mathcal W}^\infty$ be such that $w \geq 0$ and $\lceil \|w\|_{\infty} \rceil = M$, and let $\mathcal{B}_k$, $\mathcal{V}_k$, $\mathcal{G}_k$, \mahya{and $\mathcal{R}_k$} be as defined above. Then, 
$$\Delta \setminus \left(\bigcup_{k=1}^{mM-1}\mathcal{G}_k\right) = \mahya{\bigsqcup_{k=0}^{mM-1}}\mathcal{R}_k,$$
\mahya{where $\bigsqcup$ denotes the disjoint union of sets.}
\end{lemma}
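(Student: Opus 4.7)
The plan is to prove the two set-theoretic inclusions separately, with disjointness handled first (since it is the cleaner piece) and then coverage. The nesting properties of the $\B_k$ and $\V_k$ that were already recorded just above the lemma statement, together with the definitional fact $\V_k\subseteq \Delta\setminus \B_k$, are the only ingredients I expect to need.

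\textbf{Disjointness.} Suppose $(x,y)\in \mathcal{R}_k\cap \mathcal{R}_{k'}$ with $0\leq k<k'\leq mM-1$. Then $(x,y)\in \V_{k+1}$ and $(x,y)\in \B_{k'}$. Since the $\B_j$'s are decreasing in $j$ and $k'\geq k+1$, we have $\B_{k'}\subseteq \B_{k+1}$, so $(x,y)\in \B_{k+1}$. But by the very definition of $\V_{k+1}$ (which requires membership in $\Delta\setminus \B_{k+1}$), we have $\V_{k+1}\cap \B_{k+1}=\emptyset$, a contradiction. Hence the $\mathcal{R}_k$ are pairwise disjoint.

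\textbf{Coverage.} Let $(x,y)\in \Delta\setminus \G$. For each $k\in\{1,\ldots,mM-1\}$ we have $(x,y)\notin \G_k$, so by the definition of $\G_k$, the point lies in $\B_k\cup \V_k$. Set
\[
k^*:=\max\bigl\{k\in\{0,1,\ldots,mM-1\}\,:\,(x,y)\in \B_k\bigr\}.
\]
This set is non-empty because $\B_0=\Delta$, and the maximum is at most $mM-1$ because $\B_{mM}=\emptyset$. By the choice of $k^*$, we have $(x,y)\in \B_{k^*}$ and $(x,y)\notin \B_{k^*+1}$. It remains to show $(x,y)\in \V_{k^*+1}$, which will yield $(x,y)\in \mathcal{R}_{k^*}$. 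If $k^*+1=mM$ this is immediate since $\V_{mM}=\Delta$. Otherwise $1\leq k^*+1\leq mM-1$, so $(x,y)\in \B_{k^*+1}\cup \V_{k^*+1}$ by the assumption that $(x,y)\notin \G_{k^*+1}$; combined with $(x,y)\notin \B_{k^*+1}$, we conclude $(x,y)\in \V_{k^*+1}$. Thus $(x,y)\in \mathcal{R}_{k^*}$, establishing $\Delta\setminus \G\subseteq \bigcup_{k=0}^{mM-1}\mathcal{R}_k$. The reverse inclusion is trivial: any point of $\mathcal{R}_k=\B_k\cap \V_{k+1}$ lies in $\V_{k+1}\subseteq \Delta\setminus \B_{k+1}$, hence (by the nesting of $\B$'s and $\V$'s) it cannot simultaneously satisfy both conditions that define membership in $\G_j$ for any $j$, so it lies in $\Delta\setminus \G$.

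I do not anticipate any serious obstacle here; this is essentially bookkeeping on the indices. The only mildly delicate point is the boundary case $k^*+1=mM$, where the argument via $\G_{k^*+1}$ does not apply because $\G$ is only indexed up to $mM-1$, and one must fall back on the convention $\V_{mM}=\Delta$. Keeping that convention explicit (as the paper already does) makes the argument go through uniformly.
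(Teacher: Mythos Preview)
Your argument is correct. The disjointness step matches the paper's almost verbatim (both exploit $\V_j\cap\B_j=\emptyset$ together with the nesting $\B_{k'}\subseteq\B_{k+1}$ or equivalently $\V_{k+1}\subseteq\V_{k'}$). For the equality, however, the paper takes a more algebraic route: it rewrites $\Delta\setminus\G=\bigcap_{k=1}^{mM-1}(\B_k\cup\V_k)$ via De~Morgan, distributes the intersection over the unions, and then identifies each surviving term $X_1\cap\cdots\cap X_{mM-1}$ (with $X_i\in\{\B_i,\V_i\}$) as one of the $\mathcal{R}_j$. Your max-index argument, defining $k^*=\max\{k:(x,y)\in\B_k\}$ and checking $(x,y)\in\V_{k^*+1}$, is a cleaner pointwise alternative that avoids the combinatorial expansion entirely; it also makes the boundary case $k^*=mM-1$ transparent via the convention $\V_{mM}=\Delta$. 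One small suggestion: your reverse inclusion would read better if you spelled out the two-line case split ($j\leq k$ gives $(x,y)\in\B_k\subseteq\B_j$; $j\geq k+1$ gives $(x,y)\in\V_{k+1}\subseteq\V_j$) rather than leaving it as ``by the nesting.''
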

\mahya{\begin{proof}
Let $1\leq i<j\leq mM-1$, and note that $\mathcal{R}_i\subseteq \V_{i+1}\subseteq \V_j$ and $\mathcal{R}_j\subseteq \B_{j}$. Since $\V_j\cap \B_j=\emptyset$, we conclude that $\mathcal{R}_i\cap \mathcal{R}_j=\emptyset$ as well. So the regions $\mathcal{R}_k$ are disjoint. Next, observe that
\begin{equation*}
    \Delta \setminus \left(\bigcup_{k=1}^{mM-1}\mathcal{G}_k\right) = \Delta \setminus \left(\bigcup_{k=1}^{mM-1}\Delta \setminus (\mathcal{B}_k \cup \mathcal{V}_k)\right) = \bigcap_{k=1}^{mM-1}(\mathcal{B}_k \cup \mathcal{V}_k).
\end{equation*}
Now, we consider the expansion of $(\mathcal{B}_1 \cup \mathcal{V}_1)\cap(\mathcal{B}_2 \cup \mathcal{V}_2)\cap\ldots\cap(\mathcal{B}_{mM-1} \cup \mathcal{V}_{mM-1})$ into expressions $X_1\cap\ldots\cap X_{mM-1}$ with $X_i \in \{\mathcal{B}_i,\mathcal{V}_i\}$. We further note that $X_1\cap\ldots\cap X_{mM-1}=\emptyset$ whenever $X_i = \mathcal{V}_i$ and $X_j = \mathcal{B}_j$ for some $i<j$; thus, every nonempty term $X_1\cap\ldots\cap X_{mM-1}$ from the above expansion must be of one of the following forms:
\begin{enumerate}
    \item[(i)] $X_1\cap\ldots\cap X_{mM-1} = \mathcal{B}_j \cap \mathcal{V}_{j+1} = \mathcal{R}_j$ with $1 \leq j < mM-1$ if there is at least one black and one white region amongst the $X_i$.
    
    \item[(ii)] $X_1\cap\ldots\cap X_{mM-1} = \mathcal{V}_1\cap\ldots\cap\mathcal{V}_{mM-1} = \mathcal{V}_1$ if all $X_i$ are white.
    
    \item[(iii)] $X_1\cap\ldots\cap X_{mM-1} = \mathcal{B}_1\cap\ldots\cap\mathcal{B}_{mM-1} = \mathcal{B}_{mM-1}$ if all $X_i$ are black.
\end{enumerate}
This completes the proof, as $\mathcal{V}_1 = \mathcal{V}_1\cap\mathcal{B}_0 = \mathcal{R}_0$ and $\mathcal{B}_{mM-1}=\mathcal{B}_{mM-1}\cap\mathcal{V}_{mM} = \mathcal{R}_{mM-1}$.
\end{proof}}

By definition, for every $k\in \{1,\ldots, mM-1\}$, the regions $\B_k$ and $\V_k$ have \emph{upper} and \emph{lower boundary} functions $f_k,g_k:[0,1]\rightarrow [0,1]$, where $f_k$ is the upper boundary of $\B_k$ and $g_k$ is the lower boundary of $\V_{k}$. These are defined in \cite{ghandehari2020graph} as follows: 
\begin{eqnarray*}
f_k(x) &=& \sup\{z \in [x,1]: (x,z) \in \mathcal{B}_k\}, \\
g_k(x) &=& \inf\{z \in [x,1]: (x,z) \in \mathcal{V}_k\};
\end{eqnarray*}
with the convention $\inf \emptyset = 1$. Additionally, we define $f_0(x) = 1$ and $g_{mM}(x) = x$ for all $x \in [0, 1]$ to represent the corresponding boundaries for $\mathcal{B}_0 = \mathcal{V}_{mM} = \Delta$. Finally, since $f_k$ and $g_{k+1}$ are the upper and lower boundaries of $\mathcal{B}_k$ and $\mathcal{V}_{k+1}$ respectively, if the region $\mathcal{R}_k$ is nonempty, then it is bounded from below by $g_{k+1}$ and from above by $f_k$. We refer to Figure \ref{fig:bnwregions} for a visual representation of the regions and \IN{boundary function}s previously mentioned.

From the definition of the black and white regions, it is easy to see that the functions $f_k$ and $g_k$ are both increasing functions and thus only admit jump discontinuities. We naturally extend the \IN{graph} of these functions to \textit{\IN{boundary curve}s} by adding vertical line segments connecting any such discontinuities, denoting the resulting curves once again by $f_k$ and $g_k$ respectively.

Let $S,T \subseteq [0,1]$ be measurable. We say that  $S \times T$ \textit{crosses} a \IN{boundary curve} $f_k$ or $g_k$ if the top-left corner of the cell is strictly above the \IN{boundary curve} and its bottom-right corner is strictly below the curve. This definition is used as $S \times T$ need not be a connected subset of $\R^2$, so a \IN{boundary curve} can go through the cell without intersecting it. Figure \ref{fig:crossing} depicts such behavior.
\begin{figure}[h]
    \centering
    \includegraphics[scale=0.4]{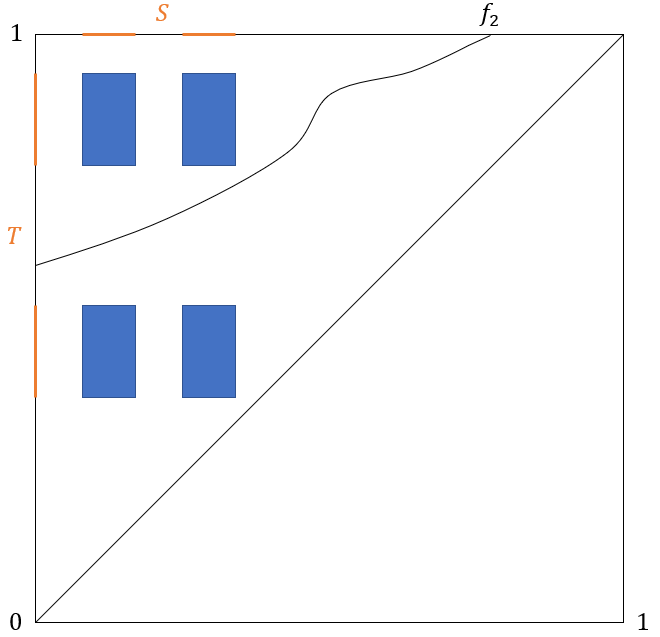}
    \caption{An example of a cell $S \times T$ that crosses a \IN{boundary curve} $f_2$ without intersecting it.}
    \label{fig:crossing}
\end{figure}
%

The proof of our recovery theorem is based on the idea that the total area of the grey regions $\mathcal{G}_k$ is small. This allows us to concentrate on the behaviour of $w$ and $R_w^{\alpha}$ inside the regions $\mathcal{R}_k$, where their values are strictly controlled. We show that their local average difference in these regions can be controlled by $\Lambda(w)$, leading to the conclusion that $\|w-R_w^{\alpha}\|_\Box$ must be small. We introduce three lemmas here, each necessary in the proof of our main result. We will present the proofs of Lemma~\ref{lem:splitint} and Lemma~\ref{lem:gnrlpigeon} in Appendix \ref{section:proofs of lemmas}. The proof of Lemma~\ref{lem:grey-regions-small} is very similar to \cite[Lemma 4.10]{ghandehari2020graph}, \mahya{but to keep this article self-contained, we present a sketch of this proof in Appendix \ref{section:proofs of lemmas} as well}.
\begin{lemma}\label{lem:grey-regions-small}
Let $k\in{\mathbb Z}^{\geq 0}$, $w \in \W^p$ with $w\geq 0$, and $\alpha \in (0,1)$. Then, $\overline{\G_k}$ does not contain any $\beta\times \beta$ square, where $\beta>\alpha$. Here, $\overline{\G_k}$ denotes the closure of $\G_k$ in the Euclidean topology of ${\mathbb R}^2$. 
\end{lemma}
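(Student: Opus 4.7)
The plan is to argue by contradiction. Assume that $\overline{\G_k}$ contains a $\beta\times\beta$ square $Q=[a,a+\beta]\times[c,c+\beta]$ with $\beta>\alpha$. Since $\overline{\G_k}\subseteq \Delta$, we must have $a+\beta\leq c$, so $Q$ sits weakly above the diagonal. The driving idea is that because $\beta>\alpha$ there is room to place an $\alpha$-cell $S\times T$ strictly inside $Q$, and the extra slack lets us locate two points of $\G_k$ whose $\UL$ and $\LR$ regions both engulf this same $S\times T$; the defining conditions of $\B_k$ and $\V_k$ will then force incompatible bounds on $\overline{w}(S\times T)$.

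Concretely, I would fix $S\times T=[s_0,s_0+\alpha]\times[t_0,t_0+\alpha]$ with $s_0\in(a,a+\beta-\alpha)$ and $t_0\in(c,c+\beta-\alpha)$, which is possible since $\beta>\alpha$. I would then introduce the auxiliary rectangles
\begin{equation*}
R_1:=[s_0+\alpha,a+\beta]\times[c,t_0],\qquad R_2:=[a,s_0]\times[t_0+\alpha,c+\beta],
\end{equation*}
each of positive area and contained in $Q\subseteq \overline{\G_k}$. By design, every $p\in R_1$ satisfies $S\times T\subseteq \UL(p)$ and every $p\in R_2$ satisfies $S\times T\subseteq \LR(p)$; the relation $a+\beta\leq c$ also ensures $S\times T\subseteq \Delta$.

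Next I would pick an interior point $p_1^*$ of $R_1$ with strict inequalities $x_{p_1^*}>s_0+\alpha$ and $y_{p_1^*}<t_0$. Since $p_1^*\in \overline{\G_k}$, there is a sequence $\{p_1^n\}\subseteq \G_k$ with $p_1^n\to p_1^*$; for large $n$ these strict inequalities survive, so $S\times T\subseteq \UL(p_1^n)$, and the condition $p_1^n\notin \B_k$ yields $\overline{w}(S\times T)\leq k/m$. Running the same argument through an interior point of $R_2$ produces $\{p_2^n\}\subseteq \G_k$ with $S\times T\subseteq \LR(p_2^n)$ for large $n$; since $p_2^n\in \Delta\setminus(\B_k\cup\V_k)$, the definition of $\V_k$ forces $\overline{w}(S\times T)>k/m$. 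The two inequalities are incompatible, and the contradiction is reached.

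The main obstacle is precisely that we only know $Q\subseteq \overline{\G_k}$ rather than $Q\subseteq \G_k$; in principle $\G_k$ could have very thin structure inside $Q$. This is the reason for working with strictly interior points of $R_1$ and $R_2$ and placing $S\times T$ strictly inside $Q$: the strict slack built into every geometric inclusion guarantees that the containments $S\times T\subseteq \UL(\cdot)$ and $S\times T\subseteq \LR(\cdot)$ persist when we pass from the limit points $p_i^*$ to actual members of $\G_k$ nearby. Once these persistences are secured, the contradiction falls out immediately from the definitions of $\B_k$ and $\V_k$.
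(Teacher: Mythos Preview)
Your argument is correct and follows essentially the same strategy as the paper's: place an $\alpha\times\alpha$ cell inside the putative square, then exhibit two points of $\G_k$---one to the lower-right of the cell (so that $\notin\B_k$ forces $\overline{w}(S\times T)\le k/m$) and one to its upper-left (so that $\notin\V_k$ forces $\overline{w}(S\times T)>k/m$)---and read off a contradiction. The only real difference is in how the passage from $\overline{\G_k}$ to $\G_k$ is handled. The paper appeals to the structure of the boundary curves $f_k,g_k$ to justify perturbing the corner points into $\G_k$, and then proves that the whole open rectangle $(a_1,a_2)\times(b_1,b_2)$ actually sits in $\G_k$ before selecting the $\alpha$-cell there; you skip that intermediate step and instead use a direct limit argument at strictly interior points, which is slightly more self-contained.

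One caveat worth flagging: you interpret ``$\beta\times\beta$ square'' as a literal axis-aligned square $[a,a+\beta]\times[c,c+\beta]$, but in the paper's proof---and, crucially, in the application of the lemma inside the proof of the main approximation proposition---the intended meaning is any product $S\times T$ of measurable sets with $|S|=|T|=\beta$. The cells $S_i\times T_j$ to which the lemma is applied are not geometric squares in general. Your argument adapts with no extra effort: pick $a_1,a_2\in S$ and $b_1,b_2\in T$ with $a_2-a_1>\alpha$ and $b_2-b_1>\alpha$ (possible since $|S|,|T|>\alpha$), note that $a_2\le b_1$ because $S\times T\subseteq\Delta$, place the $\alpha$-cell strictly inside $(a_1,a_2)\times(b_1,b_2)$, and run your limiting argument at the two points $(a_2,b_1)$ and $(a_1,b_2)$, both of which lie in $S\times T\subseteq\overline{\G_k}$.
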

\begin{lemma}\label{lem:splitint}
Let $u\in L^\infty[0,1]$ (not necessarily non-negative), and $P\subseteq [0,1]$ be a measurable subset such that $\int_P u~dx\neq 0$. Let $0 < \beta < |P|$ be fixed. Then $P$ can be partitioned into $N:=\lceil |P|/\beta\rceil$ subsets $P_1,\ldots,P_{N}$ so that the following conditions are satisfied:

\begin{onehalfspacing}
\begin{itemize}
    \item[(i)] $P_1 \leq \ldots \leq P_{N-1}.$
    
    \item[(ii)] $|P_i|= \beta$ for $1\leq i\leq N-1$ and $|P_{N}|\leq \beta$.
    
    \item[(iii)] $\big|\int_{P_{N}} u~dx\big|\leq \frac{1}{N}\big|\int_{P}u~dx\big|$. 
\end{itemize}
\end{onehalfspacing}
\end{lemma}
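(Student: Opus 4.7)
The plan is to produce $P_N$ with $|P_N| = r := |P| - (N-1)\beta$ whose integral against $u$ equals the proportional average $rI/|P|$, where $I := \int_P u\,dx$. Once this is done, the complement $P\setminus P_N$ has measure exactly $(N-1)\beta$, and I can split it into $N-1$ ordered blocks of measure $\beta$ by cutting at the appropriate quantiles of the induced left-to-right order. After an order- and measure-preserving relabeling I may assume $P = [0,|P|]$ with Lebesgue measure, which eliminates some bookkeeping.

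To produce $P_N$, I propose a cyclic sliding-window argument. For $t$ ranging over the circle $\R/|P|\Z$, let $E_t \subseteq [0,|P|]$ denote the set obtained by cyclically translating $[0,r)$ by $t$; each $E_t$ has measure $r$, and $t \mapsto h(t) := \int_{E_t} u\,dx$ is continuous (indeed, Lipschitz with constant $\|u\|_\infty$). A quick Fubini computation, using that every $x \in [0,|P|]$ belongs to $E_t$ for a set of $t$ of measure exactly $r$, shows
\[
\int_0^{|P|} h(t)\,dt \;=\; r\int_P u\,dx \;=\; rI,
\]
so the average value of $h$ over the circle equals $rI/|P|$. By the intermediate value theorem there exists $t^*$ with $h(t^*) = rI/|P|$; I set $P_N := E_{t^*}$.

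The bound in (iii) then follows from a short arithmetic check. Since $r \leq \beta$, the identity $|P| = (N-1)\beta + r$ yields $|P| \geq (N-1)r + r = Nr$, hence $r/|P| \leq 1/N$, and therefore
\[
\left|\int_{P_N} u\,dx\right| \;=\; \frac{r|I|}{|P|} \;\leq\; \frac{|I|}{N}.
\]
The complement $P\setminus P_N$ has measure $(N-1)\beta$, and I would split it into $P_1 < P_2 < \cdots < P_{N-1}$, each of measure $\beta$, by cutting its induced ordering at the points $\beta, 2\beta, \ldots, (N-2)\beta$. Together with $P_N$ these constitute a partition of $P$ satisfying (i), (ii), and (iii).

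I expect the main obstacle to be making the cyclic parameterization precise: for $t \in (|P|-r, |P|)$ the window $E_t$ wraps around the endpoint of $[0,|P|]$ and is a union of two intervals, so one must check that $h$ is continuous across the wrap point and that the Fubini computation remains valid there. A slicker alternative that sidesteps this bookkeeping entirely is to invoke Lyapunov's theorem on the convex range of the non-atomic vector measure $E \mapsto (|E|, \int_E u\,dx)$: the range is a closed convex set containing $(0,0)$ and $(|P|, I)$, hence also contains $(r, rI/|P|)$, yielding the same $P_N$ without any cyclic argument.
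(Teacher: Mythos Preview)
Your argument is correct and takes a genuinely different (and cleaner) route than the paper. Both proofs reduce to finding a single set $P_N$ of measure $r=\delta=|P|-(N-1)\beta$ satisfying (iii), and both use a sliding-window continuity argument, but the execution diverges. The paper argues by contradiction: it first carves out an auxiliary set $Q\subset P$ of measure $\delta$ with $\int_Q u>0$, then defines a linear (non-cyclic) sliding window $\phi$ on $R=P\setminus Q$, uses the contradiction hypothesis to force $\phi$ to have constant sign, sums $M=\lceil|P|/\delta\rceil-1$ disjoint windows together with $Q$, and finishes with a somewhat delicate arithmetic verification that $M\geq N$. Your approach is direct: the cyclic window on $[0,|P|]$ has average exactly $rI/|P|$ by Fubini, so the mean-value property on the circle produces $P_N$ with $\int_{P_N}u = rI/|P|$ outright, and the bound $r/|P|\leq 1/N$ is a one-line consequence of $r\leq\beta$. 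This avoids the auxiliary set $Q$, the contradiction setup, and the $M\geq N$ computation entirely. The Lyapunov alternative you mention is slicker still and sidesteps the wrap-around bookkeeping you flag. One trivial slip: the Lipschitz constant for $h$ is $2\|u\|_\infty$ rather than $\|u\|_\infty$, since $|E_t\triangle E_{t'}|\leq 2|t-t'|$; this does not affect the argument.
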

\begin{lemma}\label{lem:gnrlpigeon}
Let $f \in L^1[0,1]^2$, and let $S,S' \subseteq [0,1]$ be measurable subsets such that $|S|=|S'|$. Suppose for a constant $C > 0$ we have
\begin{equation*}
    \iint_{S \times S'} f~dxdy\geq C.
\end{equation*}
Then, for every $\alpha \in (0,1)$, there exist measurable sets $T\subset S$ and $T'\subset S'$ such that $|T| = |T'|=\alpha|S|$ and
\begin{equation*}
    \frac{1}{|T\times T'|}\iint_{T \times T'}f~dxdy \geq \frac{C}{|S \times S'|}.
\end{equation*}
\end{lemma}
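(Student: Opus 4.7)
\textbf{Proof plan for Lemma~\ref{lem:gnrlpigeon}.} The strategy is a two-step slicing argument: first select a subset $T'\subseteq S'$ of columns on which the average of $f$ (in the column direction) is at least its average over all of $S'$, and then repeat the procedure on the row variable restricted to $T'$. At each step I will invoke the same elementary one-dimensional fact (a measure-theoretic selection lemma): for any $g\in L^1(S')$ and any $0<\alpha<1$, there exists $T'\subseteq S'$ with $|T'|=\alpha|S'|$ such that
\begin{equation*}
\frac{1}{|T'|}\int_{T'} g(y)\,dy \;\geq\; \frac{1}{|S'|}\int_{S'} g(y)\,dy .
\end{equation*}
This is proved by letting $m^{\ast}=\inf\{t:|\{g>t\}|\leq \alpha|S'|\}$ and choosing $T'$ between $\{g>m^{\ast}\}$ and $\{g\geq m^{\ast}\}$ with the exact prescribed measure; then every value of $g$ inside $T'$ dominates every value of $g$ outside $T'$, so a convex combination argument gives the stated inequality.

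With this selection lemma in hand, the first step is to define $g(y)=\int_S f(x,y)\,dx$ for $y\in S'$, so that by Fubini, $\int_{S'} g(y)\,dy = \iint_{S\times S'} f\,dxdy \geq C$. Applying the selection lemma with parameter $\alpha$ produces $T'\subseteq S'$ with $|T'|=\alpha|S'|$ and
\begin{equation*}
\iint_{S\times T'} f\,dxdy \;=\; \int_{T'} g(y)\,dy \;\geq\; \alpha\int_{S'} g(y)\,dy \;\geq\; \alpha C .
\end{equation*}

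Next I repeat the argument with the roles of the variables swapped. Set $h(x)=\int_{T'} f(x,y)\,dy$ for $x\in S$; then $\int_S h(x)\,dx \geq \alpha C$ by the previous step. Applying the same selection lemma to $h$ on $S$ with parameter $\alpha$ yields $T\subseteq S$ with $|T|=\alpha|S|$ and $\int_T h(x)\,dx\geq \alpha\int_S h(x)\,dx \geq \alpha^2 C$, i.e.\ $\iint_{T\times T'} f\,dxdy \geq \alpha^2 C$. Since $|S|=|S'|$, we have $|T\times T'|=\alpha^2|S|^2=\alpha^2|S\times S'|$, so dividing gives
\begin{equation*}
\frac{1}{|T\times T'|}\iint_{T\times T'} f\,dxdy \;\geq\; \frac{\alpha^2 C}{\alpha^2 |S\times S'|} \;=\; \frac{C}{|S\times S'|},
\end{equation*}
which is the desired conclusion.

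The only subtlety is the selection lemma, and specifically the handling of the level set $\{g=m^{\ast}\}$ when it has positive measure: one must be able to carve out from it a measurable piece of exactly the right size to make $|T'|=\alpha|S'|$ exactly. This is a standard application of the continuity of Lebesgue measure on increasing sequences of sets, but it is the one place that requires genuine care. Once it is settled the rest is bookkeeping.
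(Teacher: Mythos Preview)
Your proof is correct, but it follows a genuinely different route from the paper's. The paper first reduces to the case where $|S|$ and $\alpha|S|$ are both rational, partitions $S$ and $S'$ into $k$ consecutive pieces of equal measure, and then runs a double-counting argument over all pairs of $l$-element index subsets (where $\alpha=l/k$): each cell integral $B_{i,j}$ is counted $\binom{k-1}{l-1}^2$ times, so by pigeonhole some pair $(I,J)$ of index sets gives $\sum_{i\in I,\,j\in J}B_{i,j}\ge (l/k)^2 C$, and $T=\bigcup_{i\in I}S_i$, $T'=\bigcup_{j\in J}S'_j$ finish the job. Irrational $\alpha$ or $|S|$ is then handled by a limiting argument that the paper leaves to the reader.

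Your approach sidesteps both the rational reduction and the combinatorics by slicing one variable at a time with a decreasing-rearrangement selection lemma. This is cleaner and gives the exact measure $\alpha|S|$ directly, with the only delicate point being the one you flag: carving a piece of prescribed measure out of the level set $\{g=m^\ast\}$, which is standard since Lebesgue measure is atomless. One minor imprecision: you write that ``every value of $g$ inside $T'$ dominates every value outside $T'$,'' but in fact both sides may hit $m^\ast$; what you actually have (and all you need) is $g\ge m^\ast$ on $T'$ and $g\le m^\ast$ on $S'\setminus T'$, which still yields the average inequality. The paper's combinatorial argument, by contrast, is more elementary in that it avoids any measure-theoretic selection, at the cost of the approximation step.
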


\subsection{Proof of recovering property of $\Lambda$}
Prefacing our main result, we begin first with a necessary proposition---as the proof of Theorem~\ref{thm:main-result} heavily features ``cutting'' graphons off at certain values, we must control their behaviour once cut. The following proposition provides such control.

\begin{proposition}\label{prop:robinson-upprbnd}
Let $p>2$ and $w \in \W^{\infty}$ with $w\geq 0$. Suppose $\|w\|_p \leq 1$. If $R_w^{\alpha}$ is the \IN{Robinson approximation} of $w$ with parameter $\alpha=\|w\|_{\infty}^{-\frac{p}{3p-2}}\Lambda(w)^{\frac{2p}{5p-2}}$, then 
\begin{equation}\label{eq:approx-result}
\|w-R_w^{\alpha}\|_{\Box} \leq  \teddydone{4}\bigg(1+(8\|w-R_w^{\alpha}\|_p+2)\|w\|_{\infty}^{\frac{2p}{3p-2}}\bigg)\Lambda(w)^{\frac{p-2}{5p-2}}.
\end{equation}
\end{proposition}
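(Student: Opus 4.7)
The plan is to work with the black/white/grey region decomposition for the nonnegative bounded graphon $w$. Fix an integer $m$ (to be optimized at the end), set $M = \lceil \|w\|_\infty \rceil$, and recall from Lemma~\ref{lem:rk-parts-delt/g} that $\Delta = \mathcal{G} \sqcup \bigsqcup_{k=0}^{mM-1} \mathcal{R}_k$. Let $S, T \subseteq [0,1]$ achieve the supremum defining $\|w - R_w^\alpha\|_\Box$, and use the symmetry of $w - R_w^\alpha$ to restrict to $\Delta$, so that
$$
\iint_{(S \times T) \cap \Delta}(w - R_w^\alpha)\,dxdy = \iint_{(S \times T) \cap \mathcal{G}}(w - R_w^\alpha) + \sum_{k=0}^{mM-1}\iint_{(S \times T) \cap \mathcal{R}_k}(w - R_w^\alpha).
$$
This decomposition produces three kinds of error terms: a grey contribution, a pointwise replacement of $R_w^\alpha$ by $k/m$ on each $\mathcal{R}_k$, and a comparison of $w$ with $k/m$ on each $\mathcal{R}_k$.

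For the grey contribution, I would apply H\"older's inequality to bound
$$
\Big|\iint_{(S \times T) \cap \mathcal{G}}(w - R_w^\alpha)\Big| \le \|w - R_w^\alpha\|_p \cdot |\mathcal{G}|^{(p-1)/p},
$$
where Lemma~\ref{lem:grey-regions-small} together with the monotonicity of the boundary functions $f_k, g_{k+1}$ sandwiching each layer $\mathcal{G}_k$ yields $|\mathcal{G}_k| \lesssim \alpha$, and hence $|\mathcal{G}| \lesssim m M \alpha$. For the pointwise replacement, the defining properties of $\mathcal{B}_k$ and $\mathcal{V}_{k+1}$ pin $R_w^\alpha(x,y) \in (k/m,(k+1)/m]$ on $\mathcal{R}_k$, so the aggregate contribution of this term is at most $1/m$.

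The main obstacle is bounding $\sum_{k}\bigl|\iint_{(S\times T)\cap \mathcal{R}_k}(w - k/m)\bigr|$ in terms of $\Lambda(w)$. Each point $(x,y)\in \mathcal{R}_k$ witnesses an $\alpha\times\alpha$ cell inside $\UL(x,y)$ with $w$-average strictly above $k/m$ (from membership in $\mathcal{B}_k$) and an $\alpha\times\alpha$ cell inside $\LR(x,y)$ with $w$-average at most $(k+1)/m$ (from membership in $\mathcal{V}_{k+1}$); informally, each $(x,y)$ carries a certified ``high'' cell upper-left of it and a ``low'' cell lower-right of it. Arguing by contradiction, if the sum exceeds the target, then Lemma~\ref{lem:gnrlpigeon} extracts a fat sub-rectangle of $(S \times T) \cap \mathcal{R}_k$ on which $w$ has elevated average, and Lemma~\ref{lem:splitint} lets me slice the ambient coordinates into pieces of measure $\alpha$ that align with the UL/LR witness cells. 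Pairing these pieces will produce an equal-size triple $A \le B \le C$ with $\iint_{A \times C} w - \iint_{B \times C} w$ exceeding $2\Lambda(w)$, contradicting the definition of $\Lambda$. This construction is the technical heart of the argument and is where the careful accounting of cell sizes of order $\alpha$ enters; it yields a total contribution of order $\Lambda(w)/\alpha^2$.

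Collecting the three estimates gives a bound of the form
$$
\|w - R_w^\alpha\|_\Box \lesssim \|w - R_w^\alpha\|_p\,(mM\alpha)^{(p-1)/p} + \tfrac{1}{m} + \tfrac{mM\,\Lambda(w)}{\alpha^2}.
$$
The choice $\alpha = \|w\|_\infty^{-p/(3p-2)}\Lambda(w)^{2p/(5p-2)}$ is rigged precisely so that $\Lambda(w)/\alpha^2 = \|w\|_\infty^{2p/(3p-2)}\Lambda(w)^{(p-2)/(5p-2)}$, which matches the target exponent. Choosing an integer $m$ depending on $\|w\|_\infty$ and $\Lambda(w)$ to balance the $1/m$ and grey terms against this target aggregates the three contributions into the claimed bound; the condition $p>2$ appears here precisely to ensure a positive exponent on $\Lambda(w)$. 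Beyond the $\Lambda$-witness construction, the remainder is careful bookkeeping of exponents and absolute constants.
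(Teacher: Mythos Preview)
Your approach has a genuine gap in the ``main'' step. The regions $(S\times T)\cap\mathcal{R}_k$ over which you integrate are not product sets, so Lemma~\ref{lem:gnrlpigeon} cannot be applied to them, and the definition of $\Lambda$---which compares integrals over equal-measure products $A\times C$, $B\times C$ with $A\le B\le C$---cannot be invoked on such a region. The pointwise UL/LR witness cells attached to points of $\mathcal{R}_k$ do not assemble into a single $\Lambda$-witness for a non-product domain. Even granting some per-level bound of order $\Lambda(w)/\alpha^2$, pigeonholing over the $mM$ levels produces the term $mM\,\Lambda(w)/\alpha^2$ you record in your collected estimate, and this is fatal: with the prescribed $\alpha$ and $m\approx\Lambda(w)^{-(p-2)/(5p-2)}$ it is of order $\|w\|_\infty^{(5p-2)/(3p-2)}$, independent of $\Lambda(w)$, and shrinking $m$ to control it would prevent $1/m$ from vanishing. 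No choice of $m$ balances your three terms.

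The paper resolves both issues by reversing the order of operations. It first partitions $S$ and $T$ into pieces of size $\beta\in(\alpha,\tfrac{8}{7}\alpha)$ via Lemma~\ref{lem:splitint}, producing a grid of genuine product cells $S_i\times T_j$. Cells crossing a boundary curve---at most $O(mM/\beta)$ of them, since each of the $2mM-1$ monotone curves meets $O(1/\beta)$ grid cells---are discarded at H\"older cost $O\bigl(mM\,\|w-R_w^\alpha\|_p\,\alpha^{1-2/p}\bigr)$; by Lemma~\ref{lem:grey-regions-small} every surviving cell lies entirely in a single $\overline{\mathcal{R}_k}$. Pigeonholing over the $O(\beta^{-2})$ survivors and then shrinking with Lemma~\ref{lem:gnrlpigeon} yields \emph{one} $\alpha\times\alpha$ product cell $S'\times T'\subset\mathcal{R}_k$ with large $|\overline{w-R_w^\alpha}|$. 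This single product cell, paired with the single UL or LR witness cell of the same size $\alpha$, furnishes a legitimate $\Lambda$-triple and hence the term $\Lambda(w)/\alpha^2$ without any $mM$ factor. The grid-first ordering is what makes the $\Lambda$-argument go through and what keeps the exponents correct.
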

The proof of this proposition is inspired by \cite[Theorem 3.2]{ghandehari2020graph}, 
however, the approximation result of \cite{ghandehari2020graph} would not provide the upper bound needed for Proposition~\ref{prop:robinson-upprbnd}.
As a corollary of this proposition, we improve upon the bound obtained in \cite{ghandehari2020graph}.
\begin{proof}
By \cite[Lemma 8.10]{lovaszbook}, there exist measurable $S,T\subseteq [0,1]$ so that
$$\left|\iint_{S\times T} (w-R_w^{\alpha})\, dxdy\right|= \| w-R_w^{\alpha}\|_{\Box}.$$
Replacing $S\times T$ with $T\times S$ if necessary, we can assume without loss of generality that 
\begin{equation}\label{eq;choice-D}
\bigg|\iint_{(S\times T)\cap \Delta} (w-R_w^{\alpha})\, dxdy\bigg| \, \geq\frac{1}{2}\| w-R_w^{\alpha}\|_{\Box}. 
\end{equation} 
Fix $\beta \in (\alpha, \teddydone{\frac{8}{7}}\alpha)$. Next, we split $S$ into $N_1:=\lceil |S|/\beta\rceil$ subsets $S_1, S_2,\ldots, S_{N_1}$, and $T$ into $N_2 := \lceil |T|/\beta\rceil$ subsets $T_1,\ldots,T_{N_2}$, so that the following conditions are satisfied:
\begin{multicols}{2}
\begin{itemize}
\item[(i)] $S_1 \leq \ldots \leq S_{N_1-1}$.

\item[(ii)] $|S_i|= \beta$ for $1\leq i\leq N_1-1$ and $|S_{N_1}|\leq \beta$.

\item[(iii)] $\left|\displaystyle{\iint}_{S_{N_1}\times T} (w-R_w^{\alpha})\, dxdy\right|\leq \dfrac{\| w-R_w^{\alpha}\|_{\Box}}{N_1}$. 

\item[(i)]$T_1 \leq \ldots \leq T_{N_2-1}.$

\item[(ii)] $|T_j|= \beta$ for $1\leq j\leq N_2-1$ and $|T_{N_2}|\leq \beta$.

\item[(iii)] $\left|\displaystyle{\iint}_{S\times T_{N_2}} (w-R_w^{\alpha})\, dxdy\right|\leq \dfrac{\| w-R_w^{\alpha}\|_{\Box}}{N_2}.$
\end{itemize}
\end{multicols}
To prove that a partition of $S$ satisfying the above three properties exists, we apply Lemma~\ref{lem:splitint} to the function $u(\cdot)=\int_{T} (w-R_w^{\alpha})(\cdot,y)dy$ and $P=S$; a similar proof shows that such a partition exists for $T$.

We will assume, without loss of generality, that $N_1,N_2\geq \teddydone{8}$, as we shall show the proposition holds true if $N_1\leq \teddydone{7}$ or $N_2\leq \teddydone{7}$. Assume that $N_1\leq \teddydone{7}$. Then, $|S|\leq \teddydone{7}\beta$, and so we get
\begin{equation}\label{eq:ngeq7}
    \|w-R_w^{\alpha}\|_{\Box}=\left|\iint(w-R_w^{\alpha})\mathbbm{1}_{S\times T}\right|\leq \|w-R_w^{\alpha}\|_\infty (\teddydone{7}\beta)\leq \|w\|_\infty \left(\teddydone{8}\alpha\right)
    = \teddydone{8}\|w\|_{\infty}^{\frac{2p-2}{3p-2}}\Lambda(w)^{\frac{2p}{5p-2}}.
\end{equation}
In the above inequalities, we used the fact that since $0\leq w,R_w^\alpha$ and $\|R_w^\alpha\|_\infty\leq \|w\|_\infty$, we have  $\|w-R_w^{\alpha}\|_\infty\leq\|w\|_\infty$.
To verify that the statement of the proposition holds true in this case, we need to show that the right hand side of \eqref{eq:ngeq7} is bounded above by the right hand side of \eqref{eq:approx-result}, which reduces to showing that
\begin{equation*}
\Lambda(w)^{\frac{p+2}{5p-2}}\leq \|w\|_{\infty}^{\frac{2}{3p-2}}.
\end{equation*}
However, since $\Lambda(w)\leq 1$ and $p>2$, we have $\Lambda(w)^{\frac{p+2}{5p-2}}\leq\Lambda(w)^{\frac{2}{3p-2}}$. This, together with the fact that $\Lambda(w)\leq \|w\|_\infty$, proves that Proposition \ref{prop:robinson-upprbnd} holds if $N_1 \leq \teddydone{7}$. An identical argument shows that the proposition also holds if $N_2 \leq \teddydone{7}$. From this point forward, we make the following assumption:
\begin{assumption}\label{assump1}
 $N_1,N_2 \geq \teddydone{8}.$
\end{assumption}
We now consider the value of $w-R_w^{\alpha}$ on the sets $S_i \times T_j$. Note that with repeated applications of the triangle inequality, we have
$$\sum_{\scriptsize{
\begin{array}{c}
1\leq i< N_1, \ 1\leq j< N_2\\
(S_i\times T_j)\cap \Delta\neq \emptyset\\
\end{array}}}\hspace{-2.12pt}
\Bigg|\iint_{S_i\times T_j} (w-R_w^{\alpha})\Bigg|\geq \Bigg|\iint_{(S\times T)\cap \Delta} (w-R_w^{\alpha})\Bigg|
-\Bigg|\iint_{(S_{N_1}\times T)\cup (S\times T_{N_2})} (w-R_w^{\alpha})\Bigg|,$$
which, together with Equation~\eqref{eq;choice-D}  and property (iii) of the partitions $\{S_i\}$ and $\{T_i\}$, implies that
\begin{equation*}
\sum_{\scriptsize{
\begin{array}{c}
1\leq i< N_1, \ 1\leq j< N_2\\
(S_i\times T_j)\cap \Delta\neq \emptyset\\
\end{array}}}\hspace{-2.12pt}
\Bigg|\iint_{S_i\times T_j} (w-R_w^{\alpha})\Bigg|\geq \Big(\frac{1}{2}-\frac{1}{N_1}-\frac{1}{N_2}\Big)\| w-R_w^{\alpha}\|_{\Box}.
\end{equation*}
The above inequality, combined with Assumption~\ref{assump1}, implies that
\begin{equation}\label{eq:int-over-sq-grid}
\sum_{\scriptsize{
\begin{array}{c}
1\leq i< N_1, \ 1\leq j< N_2\\
(S_i\times T_j)\cap \Delta\neq \emptyset\\
\end{array}}}\hspace{-2.12pt}
\Bigg|\iint_{S_i\times T_j} (w-R_w^{\alpha})\Bigg|\geq \teddydone{\frac{1}{4}}\| w-R_w^{\alpha}\|_{\Box}.
\end{equation}
\begin{claim}\label{claim:si0ti0ineq}
  There exists a cell $S'\times T'\subseteq \Delta$ contained in a region $\mathcal{R}_k=\B_k\cap \V_{k+1}$ so that $|S'|=|T'|=\alpha$, and
\begin{equation*}
\Big| \iint_{S'\times T'} (w-R_w^{\alpha})dxdy\Big| \geq  \alpha^2\left[\teddydone{\frac{1}{4}}\| w-R_w^{\alpha}\|_{\Box}
-2(2mM-1)\|w-R_w^{\alpha}\|_p\alpha^{1-\frac{2}{p}}\right].
\end{equation*}
\end{claim}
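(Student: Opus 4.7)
The plan is to localize the inequality \eqref{eq:int-over-sq-grid} to a single grid cell sitting inside some region $\mathcal{R}_k$, and then shrink that cell to size $\alpha$ using Lemma~\ref{lem:gnrlpigeon}. To do so, I split the grid cells $S_i \times T_j$ appearing in the sum into two groups: Group A, consisting of cells that do not cross any boundary curve $f_k$ or $g_k$ for $1 \leq k \leq mM - 1$, and Group B, consisting of the rest. Since each boundary curve is the graph of a monotone function, in the ordered grid each crosses at most $O(N_1 + N_2) = O(1/\beta)$ cells, and with $O(mM)$ boundary curves the Group B count is $O(mM/\beta)$. Applying H\"older's inequality to each cell of area $\beta^2$ gives $|\iint_{S_i \times T_j}(w - R_w^{\alpha})| \leq \|w - R_w^{\alpha}\|_p\, \beta^{2 - 2/p}$, and combining with $\beta \leq \tfrac{8}{7}\alpha$ together with a careful crossing count yields
\[
\sum_{\text{Group B}} \left|\iint_{S_i \times T_j}(w - R_w^{\alpha})\right| \leq 2(2mM-1)\|w - R_w^{\alpha}\|_p\, \alpha^{1-2/p}.
\]

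For Group A, I claim every such cell, together with its Euclidean bounding box, lies in some $\mathcal{R}_k$. A cell that crosses no boundary curve must lie strictly above or strictly below each one, and monotonicity of $f_k$ and $g_k$ then forces the bounding box to lie entirely inside the same strip between two consecutive curves. This strip is either some $\mathcal{R}_k$ or a grey region $\mathcal{G}_{k'}$. Since $|S_i| = |T_j| = \beta$ forces each side of the bounding box to have length at least $\beta$, the bounding box contains a $\beta \times \beta$ square; as $\beta > \alpha$, Lemma~\ref{lem:grey-regions-small} excludes the grey case, so the cell lies in some $\mathcal{R}_k$. Subtracting the Group B estimate from \eqref{eq:int-over-sq-grid} then gives
\[
\sum_{\text{Group A}} \left|\iint_{S_i \times T_j}(w - R_w^{\alpha})\right| \geq \tfrac{1}{4}\|w - R_w^{\alpha}\|_{\Box} - 2(2mM-1)\|w - R_w^{\alpha}\|_p\, \alpha^{1-2/p}.
\]

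Since $|\text{Group A}| \leq (N_1 - 1)(N_2 - 1) \leq 1/\beta^2$, pigeonhole yields a Group A cell $S_i \times T_j \subseteq \mathcal{R}_k$ for which $|\iint_{S_i \times T_j}(w - R_w^{\alpha})|$ is at least $\beta^2$ times the above lower bound. Applying Lemma~\ref{lem:gnrlpigeon} to $\pm(w - R_w^{\alpha})$ (with sign chosen so the integral is non-negative) on $S_i \times T_j$ produces $S' \subseteq S_i$ and $T' \subseteq T_j$ of measure $\alpha$ with
\[
\left|\iint_{S' \times T'}(w - R_w^{\alpha})\right| \geq \frac{\alpha^2}{\beta^2}\left|\iint_{S_i \times T_j}(w - R_w^{\alpha})\right|.
\]
The $\beta^2$ from the pigeonhole and the $\alpha^2/\beta^2$ from Lemma~\ref{lem:gnrlpigeon} combine to yield the desired $\alpha^2$ prefactor, with $S' \times T' \subseteq \mathcal{R}_k$ as required.

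The main obstacle is the Group A bounding-box argument: because $S_i$ and $T_j$ are merely measurable sets rather than intervals, one must leverage monotonicity of each $f_k$ and $g_k$ to promote set-level containment in a strip to genuine bounding-box-level containment, so that Lemma~\ref{lem:grey-regions-small} can be invoked on an actual $\beta \times \beta$ Euclidean square. Matching the precise constant $2(2mM-1)$ in the Group B bound is a delicate bookkeeping exercise but poses no conceptual difficulty.
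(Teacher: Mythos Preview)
Your approach matches the paper's almost exactly: separate the grid cells into those that cross some boundary curve and those that do not, bound the crossing contribution via H\"older, rule out grey-region containment for the non-crossing cells via Lemma~\ref{lem:grey-regions-small}, pigeonhole onto one good $\beta\times\beta$ cell, then shrink with Lemma~\ref{lem:gnrlpigeon}.

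Two remarks. First, your route to the exact constant $2(2mM-1)$ does not quite work as written: the H\"older estimate naturally gives $2(2mM-1)\|w-R_w^\alpha\|_p\,\beta^{1-2/p}$, and since $\beta>\alpha$ with $1-2/p>0$ you cannot simply replace $\beta$ by $\alpha$ without picking up a factor $(8/7)^{1-2/p}>1$. The paper instead carries $\beta$ all the way through the pigeonhole and the application of Lemma~\ref{lem:gnrlpigeon}, obtaining the bound with $\beta^{1-2/p}$, and only then lets $\beta\to\alpha$ (the inequality holds for every $\beta\in(\alpha,\tfrac{8}{7}\alpha)$). This limiting step is the missing ingredient in your sketch. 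Second, the ``main obstacle'' you flag is not one: although Lemma~\ref{lem:grey-regions-small} is phrased in terms of a $\beta\times\beta$ ``square'', its proof in the appendix only uses that $S,T$ are measurable with $|S|=|T|=\beta>\alpha$, so it applies directly to $S_i\times T_j$ with no bounding-box argument needed. (Also, do not forget to include the diagonal curve $g_{mM}$ in your crossing count; this is what makes the total number of curves $2mM-1$ rather than $2(mM-1)$.)
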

We use the pigeonhole principle to prove the claim. By Lemma~\ref{lem:rk-parts-delt/g}, $\Delta=\bigcup_{k=0}^{mM-1}\mathcal{R}_k\, \cup  \, \bigcup_{k=1}^{mM-1} \G_k$, and each of the regions $\G_k$ or $\mathcal{R}_k$ is bounded by \IN{boundary curve}s  in
$\{f_k, g_l: \ 1\leq k\leq mM-1,\, 1\leq l\leq mM\}$. Thus, if a  cell $S_i\times T_j$  does not cross the \IN{graph} of any of these \IN{boundary curve}s, then it must be entirely contained inside one closed region $\overline{\mathcal{R}_k}$ or $\overline{\G_k}$. 
Next, by Lemma~\ref{lem:grey-regions-small}, no grey regions $\overline{{\G_k}}$ can contain any ${S_i}\times {T_j}$ with $1\leq i<N_1$ and $1\leq j<N_2$, because $|S_i|=|T_j|=\beta$ for such $i$ and $j$. Thus, these cells must either lie in a single region $\overline{\mathcal{R}_k}$ or cross a \IN{boundary curve}. Let $\I$ denote the collection of indices $(i,j)$ with $i<N_1$ and $j<N_2$, for which the associated cells $S_i \times T_j$ do not lie in a single region $\overline{\mathcal{R}_k}$. From above,
$$\I=\Big\{(i,j): 1\leq i<N_1, 1\leq j< N_2,~\mbox{and}~\exists\  1\leq k \leq mM-1~\mbox{s.t.}~(S_i\times  T_j)~\mbox{crosses}~f_k~\mbox{or}~g_{k}~\mbox{or } g_{mM}\Big\}.$$
Because the lower and upper boundaries $f_k,g_k$ are increasing curves, each $f_k$ and $g_k$ cross at most $2/\beta$ cells from the grid. As there are $2mM-1$ total $f_k$ and $g_k$, we have 
$$|\I|\leq \frac{2(2mM-1)}{\beta}.$$
Using H\"{o}lder's inequality and that every cell indexed in $\I$ is of size $\beta^2$, we get 
\begin{equation}\label{eq:over-I}
\sum_{\scriptsize{
\begin{array}{c}
(i,j)\in \I \\
(S_i\times T_j)\cap \Delta\neq \emptyset\\
\end{array}}}
\left|\iint_{S_i\times T_j} (w-R_w^{\alpha})\, dxdy\right|\leq \frac{2(2mM-1)}{\beta} \|w-R_w^{\alpha}\|_p\beta^{2-\frac{2}{p}}. 
\end{equation}
Putting inequalities \eqref{eq:int-over-sq-grid} and \eqref{eq:over-I} together, it must be true that
\begin{equation*}
\sum_{\scriptsize{
\begin{array}{c}
1\leq i< N_1, \ 1\leq j< N_2\\
(S_i\times T_j)\cap \Delta\neq \emptyset\\
(i,j)\not\in {\cal I}
\end{array}}}
\left|\iint_{S_i\times T_j} (w-R_w^{\alpha})\, dxdy\right|
\geq \teddydone{\frac{1}{4}}\| w-R_w^{\alpha}\|_{\Box}
- 2(2mM-1) \|w-R_w^{\alpha}\|_p\beta^{1-\frac{2}{p}}. 
\end{equation*}
 Since there are at most $(\lfloor \beta^{-1}\rfloor)^2$  cells  $S_i\times T_j$ of size $\beta\times \beta$ and $\beta^2 \leq (\lfloor \beta^{-1}\rfloor)^{-2}$, there must exist a cell $S_{i_0}\times T_{j_0}\subseteq \Delta$ so that $|S_{i_0}|=|T_{j_0}|=\beta$, $(i_0,j_0)\not\in \I$, and
\begin{equation*}
\left| \iint_{S_{i_0}\times T_{j_0}} (w-R_w^{\alpha})dxdy\right| \geq  \beta^2\left[\teddydone{\frac{1}{4}}\| w-R_w^{\alpha}\|_{\Box}
- 2(2mM-1) \|w-R_w^{\alpha}\|_p\beta^{1-\frac{2}{p}}\right].
\end{equation*}
So $S_{i_0}\times T_{j_0}$ lies entirely in $\overline{\mathcal{R}_k}=\overline{\B_k\cap \V_{k+1}}$ for some $0\leq k\leq mM-1$. By Lemma \ref{lem:gnrlpigeon}, we can reduce $S_{i_0}\times T_{j_0}$ to an $\alpha \times \alpha$ cell, called $S'\times T'$, contained in $\mathcal{R}_k=\B_k\cap \V_{k+1}$ satisfying
\begin{equation*}
\left| \iint_{S'\times T'} (w-R_w^{\alpha})\, dxdy\right| \geq  \alpha^2\left[\teddydone{\frac{1}{4}}\| w-R_w^{\alpha}\|_{\Box}-
{2(2mM-1)}\|w-R_w^{\alpha}\|_p\beta^{1-\frac{2}{p}}\right].
\end{equation*}
This inequality holds for all $\beta > \alpha$; taking $\beta \to \alpha$ proves Claim \ref{claim:si0ti0ineq}.

The cell $S' \times T'$ is thus contained in some region $\mathcal{R}_k$. From the definition of $R_w^{\alpha}$ and the black and white regions, we observe that if $1\leq k\leq Mm-1$, then $\frac{k}{m}< R_w^{\alpha}\leq\frac{k+1}{m}$ on $S'\times T'$, and if $k=0$, then $0\leq R_w^{\alpha}\leq\frac{1}{m}$ on $S'\times T'$.

\begin{claim}\label{claim-3case}
Under the assumptions made so far, 
\begin{equation*}
\| w-R_w^{\alpha}\|_{\Box}\leq \teddydone{4}\left(\frac{2\Lambda(w)}{\alpha^2}+ \frac{1}{m}+{2(2mM-1)} \|w-R_w^{\alpha}\|_p\alpha^{1-\frac{2}{p}}\right).
\end{equation*}
\end{claim}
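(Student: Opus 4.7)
The plan is to invoke Claim~\ref{claim:si0ti0ineq} and reduce the proof to the single estimate
$$\left|\iint_{S'\times T'}(w - R_w^{\alpha})\,dxdy\right| \leq 2\Lambda(w) + \frac{\alpha^2}{m};$$
an algebraic rearrangement of the inequality in Claim~\ref{claim:si0ti0ineq} then delivers Claim~\ref{claim-3case} directly. Since $\iint_{S'\times T'}R_w^{\alpha}$ lies in $[k\alpha^2/m, (k+1)\alpha^2/m]$ on $S' \times T' \subseteq \mathcal{R}_k$ (with the lower endpoint replaced by $0$ when $k=0$), the task becomes bounding $\iint_{S'\times T'} w$ above by $(k+1)\alpha^2/m$ and below by $k\alpha^2/m - 2\Lambda(w)$.

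For the upper bound I first observe that $|S'|=|T'|=\alpha$ together with $S' \leq T'$ (essentially) force $\sup S' \geq \alpha$ and $\inf T' \leq 1-\alpha$, so the corner $(\sup S', \inf T')$ lies in $[\alpha, 1-\alpha]^2$, where $R_w^{\alpha}$ is continuous by Proposition~\ref{prop:rw-properties}(ii). Approaching this corner via a sequence $(s_n, t_n) \in S' \times T' \cap [\alpha, 1-\alpha]^2$, and using $R_w^{\alpha} \leq (k+1)/m$ on $\mathcal{V}_{k+1}$, continuity yields $R_w^{\alpha}(\sup S', \inf T') \leq (k+1)/m$. Since $S' \times T' \subseteq \UL(\sup S', \inf T')$ is itself an $\alpha \times \alpha$ cell appearing in the supremum that defines $R_w^{\alpha}$ at that corner, I conclude $\overline{w}(S' \times T') \leq (k+1)/m$.

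For the lower bound, the case $k=0$ is immediate from $w \geq 0$. For $k \geq 1$, the condition $S' \times T' \subseteq \mathcal{B}_k$ forces $\inf S' \geq \alpha$ and $\sup T' \leq 1-\alpha$, so the opposite corner $(\inf S', \sup T')$ also lies in $[\alpha, 1-\alpha]^2$. Continuity gives $R_w^{\alpha}(\inf S', \sup T') \geq k/m$, which for each $\epsilon > 0$ produces an $\alpha \times \alpha$ cell $U \times V \subseteq \UL(\inf S', \sup T') = [0, \inf S'] \times [\sup T', 1]$ with $\overline{w}(U \times V) > k/m - \epsilon$ and, by construction, $U \leq S' \leq T' \leq V$. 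I then telescope
$$\iint_{U \times V} w - \iint_{S'\times T'} w = \left(\iint_{U \times V} w - \iint_{S' \times V} w\right) + \left(\iint_{S' \times V} w - \iint_{S' \times T'} w\right),$$
bounding the first parenthesized term by the first supremum in the definition of $\Lambda(w)$ (via $U \leq S' \leq V$) and the second by the second supremum (via $S' \leq T' \leq V$). Since these two suprema sum to exactly $2\Lambda(w)$ by definition, the entire chain is controlled by $2\Lambda(w)$, not $4\Lambda(w)$, and sending $\epsilon \to 0$ gives $\iint_{S' \times T'} w \geq k\alpha^2/m - 2\Lambda(w)$.

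Combining the upper and lower bounds yields the target estimate, and plugging into Claim~\ref{claim:si0ti0ineq} finishes the proof. The main subtlety is the ``chain-sum'' step of the lower-bound argument: by splitting the telescoping across the two \emph{separate} supremum terms comprising $\Lambda(w)$, rather than invoking a single supremum twice, one saves the crucial factor of $2$ needed to match the $2\Lambda(w)/\alpha^2$ constant in the statement (rather than a looser $4\Lambda(w)/\alpha^2$). A secondary point to double-check is the existence of the approximating sequence $(s_n,t_n)$ inside $[\alpha,1-\alpha]^2$, which is guaranteed once one verifies that $\sup S' \leq \inf T'$ and both essentially lie in $[\alpha,1-\alpha]$.
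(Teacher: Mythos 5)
Your proposal is correct, reaches the paper's constant, and takes a genuinely different route. The paper breaks the argument into three sign/endpoint cases (Case 1: $\iint(w-R_w^{\alpha})>0$ with $0\leq k\leq mM-2$; Case 2: $\iint\leq 0$ with $1\leq k\leq mM-1$; Case 3 for the extremal $k$), and in each case produces a single $\Lambda$-chain comparing $\overline{w}(S'\times T')$ to a witness cell ($S_a\times T_b\subseteq\LR$ with $\overline{w}\leq(k+1)/m$, or $S_c\times T_d\subseteq\UL$ with $\overline{w}>k/m$) drawn directly from the $\V_{k+1}$ or $\B_k$ definition at a corner of $S'\times T'$. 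You instead pin $\iint_{S'\times T'}w$ on both sides in a unified, sign-free way: the upper bound $\overline{w}(S'\times T')\leq(k+1)/m$ comes from Proposition~\ref{prop:rw-properties}(ii) (continuity of $R_w^{\alpha}$) applied at the near corner $(\sup S',\inf T')$, while only the lower bound uses a $\Lambda$-chain, with the witness $U\times V$ produced from the sup defining $R_w^{\alpha}$ at the far corner $(\inf S',\sup T')$. This removes the sign split and shows cleanly that $\Lambda$ enters only once; the cost is an explicit reliance on the continuity proposition, which the paper's version of this step does not need. One small correction to your commentary: the ``split the telescope across the two separate sups'' move is accurate but is not a point of distinction from the paper — the chain in Case 1 of the paper telescopes through $S'\times T_b$ in exactly the same way, also using each supremum in Definition~\ref{def:lambda} once to land at $2\Lambda(w)/\alpha^2$.

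The one place your argument needs care (and which you rightly flag) is the approximating sequence: you need points of $S'\times T'$ inside $[\alpha,1-\alpha]^2$ converging to the corner. For $k\geq 1$ this is automatic, since $S'\times T'\subseteq\B_k$ already forces $S',T'\subseteq[\alpha,1-\alpha]$ (a witness cell in $\UL(x,y)$ requires $x\geq\alpha$ and $y\leq 1-\alpha$). For $k=0$, however, $\B_0=\Delta$ imposes no such constraint, so $S'$ can carry positive measure below $\alpha$ and, in a degenerate configuration such as $S'\equiv[0,\alpha]$ a.e., the sequence you want does not exist inside $[\alpha,1-\alpha]^2$. The paper's use of the $\V_{1}$ witness cell in $\LR$ sidesteps this particular obstruction, so for $k=0$ your upper bound should either fall back to that $\V_1$-cell argument or explicitly patch $S',T'$ on a null set so the corner is attained. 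This is a fixable gap, not a conceptual one, but it is a gap.
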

\noindent To prove the claim, we consider three cases:
\begin{description}
\item[{\bf Case 1:}] Assume that $\iint_{S'\times T'} (w-R_w^{\alpha})\, dxdy>0$ and $0\leq k\leq mM-2$. 

Using Claim~\ref{claim:si0ti0ineq} and the fact that $|S'\times T'|=\alpha^2$, we have
\begin{equation}\label{eqn:avg}
\overline{w}(S'\times T')-\frac{k}{m}\geq \overline{w-R_w^{\alpha}}(S'\times T')
\geq
\teddydone{\frac{1}{4}}\| w-R_w^{\alpha}\|_{\Box}- 2(2mM-1) \|w-R_w^{\alpha}\|_p\alpha^{1-\frac{2}{p}}.
\end{equation}
Now let $(x,y)$ be the lower right corner of $S'\times T'$.  Then $(x,y)\in \V_{k+1}$, implying that $\LR (x,y)$ contains a region $S_a\times T_b$ so that $|S_a|=|T_b|=\alpha$ and that $\overline{w}(S_a \times T_b)\leq  \frac{k+1}{m}$. So  inequality \eqref{eqn:avg} combined with the definition of $\Lambda$ implies that 
\begin{eqnarray*}
    \Lambda(w)&\geq& \frac{\alpha^2}{2}\bigg(\overline{w}(S'\times T')-\overline{w}(S'\times T_b)+\overline{w}(S'\times T_b)-\overline{w}(S_a\times T_b)\bigg)\\
&\geq& \frac{\alpha^2}{2}\bigg(\teddydone{\frac{1}{4}}\| w-R_w^{\alpha}\|_{\Box}- 2(2mM-1) \|w-R_w^{\alpha}\|_p\alpha^{1-\frac{2}{p}}+ \frac{k}{m}-\frac{k+1}{m}\bigg),
\end{eqnarray*}
which reduces to 
\begin{eqnarray*}
\teddydone{\frac{1}{4}}\| w-R_w^{\alpha}\|_{\Box} 
\leq \frac{2\Lambda(w)}{\alpha^2}+\frac{1}{m}+2(2mM-1)\|w-R_w^{\alpha}\|_p\alpha^{1-\frac{2}{p}}.
 \end{eqnarray*}

\item[{\bf Case 2:}] Assume $\iint_{S'\times T'}(w-R_w^{\alpha})\, dxdy\leq 0$ and $1\leq k\leq mM-1$. 

By a similar argument used to show \eqref{eqn:avg},
\begin{equation*}
\frac{k+1}{m}-\overline{w}(S'\times T')\geq \teddydone{\frac{1}{4}}\| w-R_w^{\alpha}\|_{\Box}- 2(2mM-1) \|w-R_w^{\alpha}\|_p\alpha^{1-\frac{2}{p}}.
\end{equation*}
Now let $(x,y)$ be the upper left corner of $S'\times T'$. Then $(x,y)\in \B_{k}$, which means $\UL (x,y)$ contains a region $S_c\times T_d$ such that $|S_c|=|T_d|=\alpha$ and $\overline{w}(S_c \times T_d)> \frac{k}{m}$. Using the definition of \IN{$\Lambda$}, similar to the argument in Case 1, we get
\begin{eqnarray*}
\Lambda(w)&\geq& \frac{\alpha^2}{2}\left(\frac{k}{m}+ \teddydone{\frac{1}{4}}\| w-R_w^{\alpha}\|_{\Box}- 2(2mM-1) \|w-R_w^{\alpha}\|_p\alpha^{1-\frac{2}{p}} - \frac{k+1}{m}\right),
\end{eqnarray*}
which reduces to 
\begin{eqnarray*}
 \teddydone{\frac{1}{4}}\| w-R_w^{\alpha}\|_{\Box}\leq \frac{2\Lambda(w)}{\alpha^2}+ \frac{1}{m}+{2(2mM-1)} \|w-R_w^{\alpha}\|_p\alpha^{1-\frac{2}{p}}.
\end{eqnarray*}

\item[\textbf{Case 3:}] Assume that either $\iint_{S'\times T'} (w-R_w^{\alpha})\, dxdy>0$ and $k= mM-1$ \textit{or} that $\iint_{S'\times T'} (w-R_w^{\alpha})\, dxdy\leq 0$ and $k=0$. 

In the first assumption, we have that $R_w^{\alpha} > M-\frac{1}{m}$ on $S'\times T'$ whereas $w \leq M$ by definition. Thus, $\frac{\alpha^2}{m}\geq\iint_{S'\times T'} (w-R_w^{\alpha})\, dxdy>0$. In the second assumption, we have that $R_w^{\alpha} \leq \frac{1}{m}$ on $S'\times T'$ and by negativity of the integral in the second assumption it must be that $\frac{\alpha^2}{m} \geq \iint_{S'\times T'}(R_w^{\alpha}-w)\geq0$. Combining either of these results with Claim~\ref{claim:si0ti0ineq} gives 
$$
\frac{\alpha^2}{m} \geq \alpha^2\left[\teddydone{\frac{1}{4}}\| w-R_w^{\alpha}\|_{\Box}- 2(2mM-1)\|w-R_w^{\alpha}\|_p\alpha^{1-\frac{2}{p}}\right],
$$
which can be rearranged to show
$$
\teddydone{\frac{1}{4}}\| w-R_w^{\alpha}\|_{\Box}\leq \frac{1}{m}+{2(2mM-1)} \|w-R_w^{\alpha}\|_p\alpha^{1-\frac{2}{p}}.
$$
\end{description}
So Claim \ref{claim-3case} holds in all cases and we can now finish the proof. Taking $\alpha=\|w\|_{\infty}^{-\frac{p}{3p-2}}\Lambda(w)^{\frac{2p}{5p-2}}$ and $m=\lceil\Lambda(w)^{-\frac{p-2}{5p-2}}\rceil$ in Claim~\ref{claim-3case}, we get 
\begin{equation*}
\teddydone{\frac{1}{4}}\| w-R_w^{\alpha}\|_{\Box} \leq  \bigg(1+(4\|w-R_w^{\alpha}\|_p+2)\|w\|_{\infty}^{\frac{2p}{3p-2}}\bigg)\Lambda(w)^{\frac{p-2}{5p-2}}+\bigg(4\|w-R_w^{\alpha}\|_p\|w\|_{\infty}^{\frac{2p}{3p-2}}\bigg)\Lambda(w)^{\frac{2p-4}{5p-2}}.
 \end{equation*}
Since $\Lambda(w)\leq 1$, we  get $\Lambda(w)^{\frac{2p-4}{5p-2}}\leq \Lambda(w)^{\frac{p-2}{5p-2}}$; this simplifies the above equation to the desired result.
\end{proof}
As a corollary to Proposition \ref{prop:robinson-upprbnd}, we obtain an improvement for the earlier results of \cite{ghandehari2020graph} on Robinson approximation of graphons and kernels. 
\begin{corollary}\label{cor:bnddstability}
Let $w:[0,1]^2\to [0,1]$ be a graphon and $u:[0,1]^2\to\R$ be a kernel. Then 
$$\| w-R_w^{\alpha}\|_{\Box} \leq \teddydone{44}\Lambda(w)^{\frac{1}{5}} \qquad \text{and} \qquad \| u-R_u^{\alpha^*}\|_{\Box} \leq  \teddydone{44}\|u\|_\infty^{\frac{4}{5}}\Lambda(u)^{\frac{1}{5}},$$
where $\alpha= \|w\|_{\infty}^{-\frac{1}{3}}\Lambda(w)^{\frac{2}{5}}$ and $\alpha^* = \|u\|_{\infty}^{-\frac{2}{5}}\Lambda(u)^{\frac{2}{5}}.$
\end{corollary}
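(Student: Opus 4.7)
The plan is to prove the Corollary as the $p=\infty$ specialization of Proposition~\ref{prop:robinson-upprbnd}. Proposition~\ref{prop:robinson-upprbnd} as stated requires $p<\infty$, but its proof carries over to $p=\infty$ with only minor modifications to the Hölder estimates: in the $p=\infty$ version of Claim~\ref{claim:si0ti0ineq} and Claim~\ref{claim-3case}, the factor $\|w-R_w^\alpha\|_p\,\alpha^{1-2/p}$ is replaced by $\|w-R_w^\alpha\|_\infty\,\alpha$ (using the $L^\infty$--$L^1$ Hölder inequality $\iint_{S\times T}|f|\leq\|f\|_\infty|S\times T|$), while all combinatorial steps---Lemmas~\ref{lem:grey-regions-small}, \ref{lem:splitint}, \ref{lem:gnrlpigeon}, and the partition from Lemma~\ref{lem:rk-parts-delt/g}---transfer verbatim.

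For the first inequality I would use that, since $w:[0,1]^2\to[0,1]$, we have $\|w\|_\infty\leq 1$ and $w\geq 0$, and by Proposition~\ref{prop:rw-properties}(ii) at $p=\infty$ also $\|R_w^\alpha\|_\infty\leq\|w\|_\infty\leq 1$, hence $\|w-R_w^\alpha\|_\infty\leq 1$. Substituting into the $p=\infty$ form of Claim~\ref{claim-3case} gives
\[
    \|w-R_w^\alpha\|_\Box \leq 4\left(\frac{2\Lambda(w)}{\alpha^2}+\frac{1}{m}+2(2m-1)\alpha\right),
\]
and plugging in $\alpha=\|w\|_\infty^{-1/3}\Lambda(w)^{2/5}$ together with $m=\lceil\Lambda(w)^{-1/5}\rceil$ balances all three terms at order $\Lambda(w)^{1/5}$ (using $\|w\|_\infty\leq 1$ and $\Lambda(w)\leq 1$) and totals to the stated constant $44$.

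The kernel inequality has an added wrinkle because $u$ may change sign, so the definitions of $\mathcal{B}_k,\mathcal{V}_k,\mathcal{G}_k$ need to be extended. I would let the threshold index $k$ range over $\{-mM,\ldots,mM-1\}$ (rather than $\{0,\ldots,mM-1\}$), so that the thresholds $k/m$ cover the full range $[-M,M]$ of $\overline{u}(S\times T)$, where $M=\lceil\|u\|_\infty\rceil$. This doubles the number of boundary curves $f_k, g_k$, contributing an extra factor of $2$ in one term of the adapted Claim~\ref{claim:si0ti0ineq}; the remaining combinatorics is unchanged. Using $\|R_u^{\alpha^*}\|_\infty\leq\|u\|_\infty$ from Proposition~\ref{prop:rw-properties}(ii), so $\|u-R_u^{\alpha^*}\|_\infty\leq 2\|u\|_\infty$, and substituting $\alpha^*=\|u\|_\infty^{-2/5}\Lambda(u)^{2/5}$ with $m=\lceil\|u\|_\infty^{-4/5}\Lambda(u)^{-1/5}\rceil$, the three terms balance at order $\|u\|_\infty^{4/5}\Lambda(u)^{1/5}$ and yield the constant $44$.

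The main obstacle is not conceptual but rather bookkeeping: matching the explicit constant $44$ in both inequalities requires tracking the Hölder exponents, the ceiling losses from choosing integer $m$, and---for the kernel case---the doubled boundary-curve count. Conceptually, all the heavy lifting is already done in Proposition~\ref{prop:robinson-upprbnd}; the Corollary is essentially its $p=\infty$ shadow.
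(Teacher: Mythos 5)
Your proposal takes a genuinely different route from the paper, and the difference matters both for economy of effort and for whether the constant $44$ actually comes out.

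For the graphon inequality, the paper does \emph{not} re-derive Proposition~\ref{prop:robinson-upprbnd} at $p=\infty$. Since $w\in\W_0$ lies in every $L^p$, it simply applies Proposition~\ref{prop:robinson-upprbnd} for each finite $p>2$ with $\alpha_p=\|w\|_\infty^{-p/(3p-2)}\Lambda(w)^{2p/(5p-2)}$, observes that $\|w\|_\infty\le1$ and $\|w-R_w^{\alpha_p}\|_p\le1$ already force the bound $44\,\Lambda(w)^{(p-2)/(5p-2)}$, and then lets $p\to\infty$. This limiting step is the advertised feature of the whole approach (see the concluding remark (i)), and it avoids re-auditing any of the combinatorial lemmas. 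Your re-derivation at $p=\infty$ is conceptually sound and your constant-check for this half is correct ($4(1+10\|w\|_\infty^{2/3})\le44$ when $\|w\|_\infty\le1$), but it is a longer path to the same place.

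For the kernel inequality there is a more substantive divergence, and a gap in your version. The paper never touches the black/white/grey machinery for kernels: it sets $u^*:=u/\|u\|_\infty$, applies the already-proved graphon bound to $u^*$, and then unwinds using the two homogeneity identities $\Lambda(u^*)=\|u\|_\infty^{-1}\Lambda(u)$ and $R_{u^*}^{\alpha^*}=\|u\|_\infty^{-1}R_u^{\alpha^*}$, which follow directly from the definitions. This transfers the constant $44$ exactly. Your plan instead doubles the threshold range to $k\in\{-mM,\ldots,mM-1\}$ and accepts $\|u-R_u^{\alpha^*}\|_\infty\le 2\|u\|_\infty$. Tracing those two factors of $2$ through your adapted Claim~\ref{claim-3case} with $\alpha^*=\|u\|_\infty^{-2/5}\Lambda(u)^{2/5}$ and $m=\lceil\|u\|_\infty^{-4/5}\Lambda(u)^{-1/5}\rceil$ gives roughly $4\big(2+1+32\big)\|u\|_\infty^{4/5}\Lambda(u)^{1/5}$, i.e.\ a constant on the order of $140$, not $44$. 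So as written, the bookkeeping in the kernel half does not close; to recover $44$ you would essentially need the rescaling observation that the paper uses, at which point the extended-region construction becomes unnecessary.

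One further caution: the paper's kernel step implicitly needs $u\ge0$ (so that $u^*$ lands in $\W_0$ and the first inequality applies), consistent with the hypotheses of Proposition~\ref{prop:robinson-upprbnd} and Theorem~\ref{thm:main-result}. Your modification explicitly aims at sign-changing $u$; that is a more general target, but it requires rebuilding the region lemmas and re-proving Lemma~\ref{lem:grey-regions-small} in the signed setting, none of which is sketched. If you restrict to $u\ge0$, the rescaling argument is both shorter and gives the stated constant.
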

\begin{proof}
For $w \in \W_0$, let $\alpha_p = \|w\|_{\infty}^{-\frac{p}{3p-2}}\Lambda(w)^{\frac{2p}{5p-2}}$. Then, by Proposition \ref{prop:robinson-upprbnd}, we have  $\| R_w^{\alpha_p}-w\|_{\Box} \leq  \teddydone{44}\Lambda(w)^{\frac{p-2}{5p-2}},$ since $\|w\|_\infty\leq 1$ and $\|w-R_w^{\alpha_p}\|_p\leq 1$. However, as $w \in L^p[0,1]^2$ for every $p\geq1$, we can allow $p\rightarrow \infty$, showing
$\| w-R_w^{\alpha}\|_{\Box} \leq  \teddydone{44}\Lambda(w)^{\frac{1}{5}},$ where $\alpha = \|w\|_{\infty}^{-\frac{1}{3}}\Lambda(w)^{\frac{2}{5}}$ as desired. For $u\in \W$, we scale by $\|u\|_{\infty}$ to make a new function $u^* := u/\|u\|_{\infty}$ to get that %
\begin{equation}\label{eq:scaledstable}
    \|u^*-R_{u^*}^{\alpha^*}\|_{\Box} \leq \teddydone{44}\Lambda(u^*)^{\frac{1}{5}},
\end{equation}
where $\alpha^* = \|u\|_{\infty}^{-\frac{2}{5}}\Lambda(u)^{\frac{2}{5}}$. We note that by definition of \IN{$\Lambda$} and positivity of $\|u\|_{\infty}$, it must be the case that $\Lambda(u^*)^{\frac{1}{5}}= \|u\|_{\infty}^{-\frac{1}{5}}\Lambda(u)^{\frac{1}{5}}.$ Furthermore, by definition of $R_u^{\alpha},$ we have $R_{u^*}^{\alpha^*} = \|u\|_{\infty}^{-1}R_u^{\alpha^*}.$ Thus, combining these two observations with \eqref{eq:scaledstable} yields
$\frac{1}{\|u\|_{\infty}}\|u-R_u^{\alpha^*}\|_{\Box}\leq \teddydone{44}\|u\|_{\infty}^{-\frac{1}{5}}\Lambda(u)^{\frac{1}{5}},$
which proves the claim.
\end{proof}
We are now ready to state and prove our recovery result about \IN{$\Lambda$}. We begin with a necessary definition---a key technique in this proof is taking an unbounded graphon and ``cutting it off'' at a certain threshold.
\begin{notation}[$M$-cut-off]\label{def:mcutoff}
Let $w \in \mathcal{W}^1$ and define
\begin{equation*}
E_M=\{(x,y)\in [0,1]^2: w(x,y)>M\}.
\end{equation*}
The $M$-\emph{cut-off} of $w$, denoted by $w_M$, is defined to be $w_M:=(\mathbbm{1}-\mathbbm{1}_M)w$, where $\mathbbm{1}_M$ is the characteristic function of $E_M$ and $\mathbbm{1}$ is the characteristic function of $[0,1]^2$. 
\end{notation}
\begin{theorem}\label{thm:main-result}
Suppose $w:[0,1]^2\to [0,\infty)$ is an $L^p$-\IN{kernel} with $p > 5$ and $\|w\|_p\leq1$. Then there exists some $\alpha \in [0,\frac{1}{2})$ such that $R_w^{\alpha}$, the \IN{Robinson approximation} of $w$ with parameter $\alpha$, satisfies 
\begin{equation}\label{eq:main_result}
    \|w-R_w^{\alpha}\|_{\Box} \leq 78\Lambda(w)^{\frac{p-5}{5p-5}}.
\end{equation}
\end{theorem}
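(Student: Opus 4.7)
The plan is to reduce the unbounded $L^p$ setting to the bounded case by truncation, and then apply Corollary~\ref{cor:bnddstability}. For a parameter $M \geq 1$ (to be optimized below), I would first introduce the $M$-cutoff $w_M$ of Notation~\ref{def:mcutoff}, which satisfies $\|w_M\|_\infty \leq M$, $\|w_M\|_p \leq \|w\|_p \leq 1$, and $0 \leq w_M \leq w$ pointwise. The Markov-type $L^p$-tail estimate gives
\begin{equation*}
\|w - w_M\|_1 = \int_{\{w > M\}} w \,dxdy \leq M^{-(p-1)} \|w\|_p^p \leq M^{-(p-1)},
\end{equation*}
and therefore $\|w - w_M\|_\Box \leq M^{-(p-1)}$. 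Combining this with Proposition~\ref{prop:rw-properties}(iii), which yields $\|R_w^\alpha - R_{w_M}^\alpha\|_\Box \leq \|R_w^\alpha - R_{w_M}^\alpha\|_\infty \leq \|w - w_M\|_\Box \leq M^{-(p-1)}$, the triangle inequality produces
\begin{equation*}
\|w - R_w^\alpha\|_\Box \leq 2 M^{-(p-1)} + \|w_M - R_{w_M}^\alpha\|_\Box.
\end{equation*}

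Next, since $\tilde w := w_M/M \in \W_0$ and $\|\tilde w\|_\infty \leq 1$, Corollary~\ref{cor:bnddstability} applies. Using that both $R^\alpha$ and $\Lambda$ are positively homogeneous (so $R_{w_M}^\alpha = M \cdot R_{\tilde w}^\alpha$ and $\Lambda(\tilde w) = \Lambda(w_M)/M$), I multiply the bound $\|\tilde w - R_{\tilde w}^\alpha\|_\Box \leq 44\, \Lambda(\tilde w)^{1/5}$ by $M$ to obtain
\begin{equation*}
\|w_M - R_{w_M}^\alpha\|_\Box \leq 44\, M^{4/5} \Lambda(w_M)^{1/5},
\end{equation*}
for the specific $\alpha$ prescribed by Corollary~\ref{cor:bnddstability}. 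Finally, I bound $\Lambda(w_M) \leq \Lambda(w) + M^{-(p-1)}$ via the subadditivity \eqref{eq:Lambda-prop} applied to $w_M = w + (w_M - w)$, combined with $\Lambda(w_M - w) \leq \|w - w_M\|_1$, which follows directly from the defining suprema of $\Lambda$.

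The key balance is the choice $M := \Lambda(w)^{-1/(p-1)}$, which enforces $M^{-(p-1)} = \Lambda(w)$ and hence $\Lambda(w_M) \leq 2\Lambda(w)$. Substituting, $M^{4/5} = \Lambda(w)^{-4/(5(p-1))}$, so
\begin{equation*}
\|w - R_w^\alpha\|_\Box \leq 2\Lambda(w) + 44 \cdot 2^{1/5}\, \Lambda(w)^{1/5 - 4/(5(p-1))} = 2\Lambda(w) + 44 \cdot 2^{1/5}\, \Lambda(w)^{(p-5)/(5(p-1))}.
\end{equation*}
Since $\Lambda(w) \leq \|w\|_p \leq 1$ and $(p-5)/(5(p-1)) \in (0,1)$, the linear term $2\Lambda(w)$ is dominated by $2\Lambda(w)^{(p-5)/(5(p-1))}$, yielding the stated estimate with leading constant at most $2 + 44 \cdot 2^{1/5} < 78$.

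The main obstacle will be carefully justifying the bound $\Lambda(w_M) \leq \Lambda(w) + \|w - w_M\|_1$: because $\Lambda$ is not symmetric under negation, this requires a direct estimate of $\Lambda(-(w - w_M))$ against the $L^1$-norm via the defining supremum. A secondary concern is confirming that the $\alpha$ produced by Corollary~\ref{cor:bnddstability} lies in the required range $[0,1/2)$; this follows from the smallness of $\Lambda(\tilde w)^{2/5}$ in the nontrivial regime of small $\Lambda(w)$, while the edge cases $\Lambda(w) = 0$ (where $w$ is Robinson by Proposition~\ref{prop:Lambda-prop} and $\alpha = 0$ suffices) and $\Lambda(w)$ near $1$ (where the bound is essentially trivial via the universal estimates $\|w\|_1, \|R_w^\alpha\|_\infty$) can be handled separately.
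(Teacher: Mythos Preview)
Your overall strategy---truncate $w$ at level $M$, apply Corollary~\ref{cor:bnddstability} to the bounded piece $w_M$, and reassemble via the triangle inequality---is exactly the paper's approach. However, there is a genuine gap in your handling of the term $\|R_w^\alpha - R_{w_M}^\alpha\|_\Box$.

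You invoke Proposition~\ref{prop:rw-properties}(iii) as stated to get $\|R_w^\alpha - R_{w_M}^\alpha\|_\infty \leq \|w - w_M\|_\Box$. Unfortunately, that statement in the paper contains a typo: the proof given there actually yields $\|R_w^\alpha - R_u^\alpha\|_\infty \leq \alpha^{-2}\|w - u\|_\Box$, and the version without $\alpha^{-2}$ is simply false. (Take $u=0$ and $w = \alpha^{-2}\mathbbm{1}_{[0,\alpha]\times[1-\alpha,1]}$, symmetrized; then $\|w\|_\Box = 1$ while $R_w^\alpha(\alpha,1-\alpha)=\alpha^{-2}$.) The paper's own proof of Theorem~\ref{thm:main-result} does \emph{not} use part~(iii); it uses parts~(i) and~(ii) to obtain the correct bound $\|R_w^\alpha - R_{w_M}^\alpha\|_\Box \leq \alpha^{-2}\|w-w_M\|_1$.

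With the correct $\alpha^{-2}$ factor, your argument no longer closes as written. Since $\alpha$ is prescribed by Corollary~\ref{cor:bnddstability} as (essentially) $\|w_M\|_\infty^{-2/5}\Lambda(w_M)^{2/5}$, controlling $\alpha^{-2}$ requires a \emph{lower} bound on $\Lambda(w_M)$. Subadditivity gives $\Lambda(w_M) \geq \Lambda(w) - \Lambda(w\mathbbm{1}_M) \geq \Lambda(w) - M^{-(p-1)}$, but your choice $M = \Lambda(w)^{-1/(p-1)}$ makes the right side vanish. The paper takes $M = 2\Lambda(w)^{-1/(p-1)}$ precisely so that $M^{-(p-1)} = 2^{1-p}\Lambda(w)$ and hence $\Lambda(w_M) \geq (1-2^{1-p})\Lambda(w) > 0$; it then computes $\alpha^{-2}M^{-(p-1)} \lesssim \Lambda(w)^{(p-5)/(5p-5)}$. (The paper also includes a separate Case~2 for $\Lambda(w_M)=0$, where $\alpha$ would be $0$ and a different, direct estimate on $\|w_M - R_{w_M}^\alpha\|_\Box$ is needed.) Your proof is easily repaired by adopting $M = 2\Lambda(w)^{-1/(p-1)}$ and carrying the $\alpha^{-2}$ through, but as written the third term is not controlled.
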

The idea of the proof is as follows: \mahya{we use triangle inequality to bound $\|w-R_w^{\alpha}\|_{\Box}$ by the sum of the cut-norms of $w-w_M$, $w_M-R_{w_M}^{\alpha}$ and $R_{w_M}^{\alpha}-R_{w}^\alpha$, and then bound each term using $\Lambda$.} If $w_M$ is Robinson, then the upper bound is proved without use of Corollary \ref{cor:bnddstability}. If $w_M$ is not Robinson, then $\|w_M-R_{w_M}^{\alpha}\|_{\Box}$ must be handled using Corollary \ref{cor:bnddstability}. Handling these two cases finishes the proof. 
\begin{proof}
By definition of Robinson approximation, if $\Lambda(w) = 0$, then we set $\alpha=0$, resulting in $R_w^{\alpha}=w$. Thus, we assume that $\Lambda(w) > 0$ and define our cut-off value $M = 2\Lambda(w)^{-\frac{1}{p-1}}$. We now consider two cases.

\noindent\textbf{Case 1:} Suppose that $\Lambda(w_M) >0$. 

Let $\alpha=\|w_M\|_{\infty}^{-\frac{2}{5}}\Lambda(w_M)^{\frac{2}{5}}$. The triangle inequality can then be used to show that 
\begin{equation*}
\|w-R_w^{\alpha}\|_{\Box} \leq \|w-w_M\|_{\Box}+\|w_M-R_{w_M}^{\alpha}\|_{\Box}+\|R_w^{\alpha}-R_{w_M}^{\alpha}\|_{\Box}.
\end{equation*}
We will proceed by bounding each of the terms on the right hand side one by one, starting with $\|w-w_M\|_{\Box}$. 
By definition, $\mathbbm{1}_M$ is the characteristic function of $E_M$, the region of $[0,1]^2$ where $w > M$. Since $M|E_M|^{\frac{1}{p}}\leq \|w\mathbbm{1}_M\|_p\leq \|w\|_p\leq 1$, we get $|E_M|\leq (\frac{1}{M})^p$. Therefore, for $q$ satisfying  $1/p+1/q=1$, it is true that
\begin{equation}\label{eq:qnormM}
\|\mathbbm{1}_M\|_q=|E_M|^{1/q}\leq\left(\frac{1}{M}\right)^{\frac{p}{q}}=M^{1-p}= 2^{1-p}\Lambda(w).
\end{equation}
It is also true that 
\begin{equation}\label{eq:w-wm-ineq}
    \|w-w_M\|_{\Box}\leq\|w-w_M\|_1 =\|w\mathbbm{1}_M\|_1\leq \|w\|_p\|\mathbbm{1}_M\|_q \leq 2^{1-p}\Lambda(w)
    \leq 2^{1-p}\Lambda(w)^{\frac{p-5}{5p-5}},
\end{equation}
handling the first term. We can further say that $\|w-w_M\|_{\Box}\leq\frac{1}{16}\Lambda(w)^{\frac{p-5}{5p-5}}$ for all $p > 5$. Now we shift focus to $\|w_M-R_{w_M}^{\alpha}\|_{\Box}$. By Corollary \ref{cor:bnddstability}, as $w_M$ is bounded, 
$$\|w_M-R_{w_M}^{\alpha}\|_{\Box} \leq  \teddydone{44}\|w_M\|_{\infty}^{\frac{4}{5}}\Lambda(w_M)^{\frac{1}{5}}
    \leq \teddydone{44}M^{\frac{4}{5}}(\Lambda(w)+2^{1-p}\Lambda(w))^{\frac{1}{5}},$$
where the second inequality is due to the combination of \eqref{eq:Lambda-prop}
for  $w_M=w-w\mathbbm{1}_M$ alongside the fact that  $\Lambda(-w\mathbbm{1}_M)\leq \|w\mathbbm{1}_M\|_1\leq 2^{1-p}\Lambda(w)$ by \eqref{eq:qnormM}. 
This simplifies to 
\begin{align}\label{eq:rwm-wm-ineq}
    \|w_M-R_{w_M}^{\alpha}\|_{\Box}\leq \teddydone{44} (1+2^{1-p})^{\frac{1}{5}}M^{\frac{4}{5}}\Lambda(w)^{\frac{1}{5}} 
    \leq \teddydone{44} (1+2^{1-p})^{\frac{1}{5}}\ 2^{\frac{4}{5}}\Lambda(w)^{\frac{p-5}{5p-5}}\leq \teddydone{77.6}\Lambda(w)^{\frac{p-5}{5p-5}},
\end{align}
where we used the fact that $p > 5$.

To bound the third term, we use Proposition~\ref{prop:rw-properties} (i) and Proposition~\ref{prop:Lambda-prop} (i) to get that
\begin{equation*}
\|R_w^{\alpha}-R_{w_M}^{\alpha}\|_{\Box}\leq \|R_w^{\alpha}-R_{w_M}^{\alpha}\|_{\infty}\leq \|R_{w-w_M}^{\alpha}\|_\infty\leq \alpha^{-2}\|w-w_M\|_1=\alpha^{-2}\|w\mathbbm{1}_M\|_1\leq \alpha^{-2}\|w\|_p\|\mathbbm{1}_M\|_q,
\end{equation*}
where, in the last inequality, we used H\"{o}lder's inequality with conjugate indices $p,q$. Now, using the upper bound for $\|\mathbbm{1}_M\|_q$ provided by \eqref{eq:qnormM}, and substituting the value of $\alpha$, it is true that
\begin{equation}\label{eq:mid-way}
    \|R_w^{\alpha}-R_{w_M}^{\alpha}\|_{\Box}\leq 2^{1-p}\alpha^{-2}\Lambda(w)=2^{1-p}\Lambda(w)\left(\|w_M\|_{\infty}^{-\frac{2}{5}}\Lambda(w_M)^{\frac{2}{5}}\right)^{-2}.
\end{equation}
Applying \eqref{eq:Lambda-prop} to $w=w_M+w\mathbbm{1}_M$ gets us
\begin{equation*}
\Lambda(w_M) \geq \Lambda(w)-\Lambda(w\mathbbm{1}_M) \geq \Lambda(w)-\|w\mathbbm{1}_M\|_1 \geq \Lambda(w)-\|w\|_p\|\mathbbm{1}_M\|_q \geq (1-2^{1-p})\Lambda(w),
\end{equation*}
and since $p>5$, this requires that $\Lambda(w_M) \geq \frac{15}{16}\Lambda(w)$. Using this together with $\|w_M\|_{\infty}\leq M=2\Lambda(w)^{\frac{-1}{p-1}}$ and inequality \eqref{eq:mid-way} implies that 
\begin{align*}
 2^{p-1}\|R_w^{\alpha}-R_{w_M}^{\alpha}\|_{\Box}&\leq \Lambda(w)\|w_M\|_{\infty}^{\frac{4}{5}}\Lambda(w_M)^{-\frac{4}{5}}
\leq \Lambda(w)M^{\frac{4}{5}}(1-2^{1-p})^{-\frac{4}{5}}\Lambda(w)^{-\frac{4}{5}}
\leq \frac{2^{\frac{4}{5}}}{(1-2^{1-p})^{\frac{4}{5}}}\Lambda(w)^{\frac{p-5}{5p-5}}.
\end{align*}
When $p > 5$, we have $2^{\frac{4}{5}}2^{1-p}(1-2^{1-p})^{-\frac{4}{5}} \leq 0.2$, so 
\begin{equation}\label{eq:rw-rw-ineq}
    \|R_w^{\alpha}-R_{w_M}^{\alpha}\|_{\Box} \leq 0.2\Lambda(w)^{\frac{p-5}{5p-5}}.
\end{equation}
Thus \eqref{eq:w-wm-ineq}, \eqref{eq:rwm-wm-ineq}, and \eqref{eq:rw-rw-ineq} together imply that $\|w-R_w^{\alpha}\|_{\Box} \leq \teddydone{78}\Lambda(w)^{\frac{p-5}{5p-5}},$ proving the statement of the theorem in this case.

\noindent\textbf{Case 2:} Suppose that $\Lambda(w_M) = 0$. 

Let $\alpha = M^{-\frac{2}{5}}\Lambda(w)^{\frac{2}{5}}.$ We proceed similarly to Case 1, bounding each of the three summands in the right hand side of the following inequality:
\begin{equation*}
\|w-R_w^{\alpha}\|_{\Box} \leq \|w-w_M\|_{\Box}+\|w_M-R_{w_M}^{\alpha}\|_{\Box}+\|R_w^{\alpha}-R_{w_M}^{\alpha}\|_{\Box}.
\end{equation*}
The first term on the right side of the inequality can be handled identically to Case 1, yielding $\|w-w_M\|_{\Box}\leq\frac{1}{16}\Lambda(w)^{\frac{p-5}{5p-5}}$. For the third term, we can proceed identically to Case 1 and get
$$\|R_w^{\alpha}-R_{w_M}^{\alpha}\|_{\Box}\leq 2^{1-p}\alpha^{-2}\Lambda(w).$$
Substituting $\alpha=M^{-\frac{2}{5}}\Lambda(w)^{\frac{2}{5}}$ and noting that $p>5$ results in $\|R_w^{\alpha}-R_{w_M}^{\alpha}\|_{\Box}\leq 0.2 \Lambda(w)^{\frac{p-5}{5p-5}}$.

However, the second term $\|w_M-R_{w_M}^{\alpha}\|_{\Box}$ must be handled differently. Because $\Lambda(w_M) = 0$ implies that $w_M$ is Robinson a.e., we will directly approximate $R_{w_M}^{\alpha}$. 

\begin{claim}\label{claim-case2-approx}
When $w_M:[0,1]^2\to [0,M]$ is Robinson a.e., we have $$\|w_M-R_{w_M}^{\alpha}\|_\Box \leq \iint_{\{|x-y|\leq2\alpha\}}w_M(x,y)\, dxdy.$$
\end{claim}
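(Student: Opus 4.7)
The plan is to first show that $h := w_M - R_{w_M}^{\alpha}$ is pointwise non-negative, which will collapse the cut-norm bound to an $L^1$ bound. Indeed, since $w_M$ is Robinson, for any $(x,y)\in\Delta$ and any cell $S\times T\subseteq \UL(x,y)$ with $(s,t)\in S\times T$, two applications of the Robinson inequality along the chain $s\leq x\leq y\leq t$ give $w_M(s,t)\leq w_M(x,y)$; averaging gives $R_{w_M}^{\alpha}(x,y)\leq w_M(x,y)$. For a non-negative $h$, the map $(A,B)\mapsto \iint_{A\times B}h$ is monotone in $A$ and $B$, so the supremum defining $\|h\|_\Box$ is attained at $A=B=[0,1]$ and $\|h\|_\Box=\iint h$. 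The task therefore reduces to showing $\iint h\leq \iint_{\{|x-y|\leq 2\alpha\}}w_M$.

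On the interior region $F:=\{(x,y)\in\Delta:\, x\geq\alpha,\, y\leq 1-\alpha\}$, I would exhibit a concrete competitor in the supremum defining $R_{w_M}^{\alpha}(x,y)$, namely $S\times T := [x-\alpha,x]\times[y,y+\alpha]\subseteq \UL(x,y)$. For any $(s,t)\in S\times T$, chaining Robinson along $x-\alpha\leq s\leq y+\alpha$ and $s\leq t\leq y+\alpha$ yields $w_M(s,t)\geq w_M(x-\alpha,y+\alpha)$, so $R_{w_M}^{\alpha}(x,y)\geq w_M(x-\alpha,y+\alpha)$ on $F$. Integrating the resulting pointwise inequality and performing the linear change of variables $(u,v)=(x-\alpha,y+\alpha)$, which sends $F$ bijectively onto $\{(u,v)\in\Delta:\, v-u\geq 2\alpha\}$, would produce
\[
\iint_F h \;\leq\; \iint_F w_M \;-\; \iint_{\{(u,v)\in\Delta:\, v-u\geq 2\alpha\}} w_M .
\]

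On the complementary corner region $\Delta\setminus F$, where no admissible $\alpha\times\alpha$ cell fits inside $\UL(x,y)$ and therefore $R_{w_M}^{\alpha}$ vanishes, I would use only the trivial bound $h\leq w_M$, obtaining $\iint_{\Delta\setminus F}h\leq\iint_{\Delta\setminus F}w_M$. Adding the two estimates, the $F$ and $\Delta\setminus F$ pieces of $w_M$ recombine into $\iint_\Delta w_M$ and telescope against the shifted integral, giving $\iint_\Delta h\leq \iint_{\{(x,y)\in\Delta:\, y-x<2\alpha\}}w_M$; doubling by the joint symmetry of $h$ and $w_M$ then finishes the claim. The main conceptual subtlety is that the corner bound looks far too lossy in isolation---it surrenders all the mass of $w_M$ on the two strips $\{x<\alpha\}$ and $\{y>1-\alpha\}$---but the shift of the integration region produced by the change of variables absorbs exactly this corner mass, so only the contribution from $\{|x-y|\leq 2\alpha\}$ remains.
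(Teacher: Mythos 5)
Your argument is correct and is essentially the same as the paper's: the paper introduces the auxiliary shifted function $\widetilde{w}_M(x,y)=w_M(x-\alpha,y+\alpha)$ on $[\alpha,1-\alpha]^2\cap\Delta$ and uses $R_{w_M}^{\alpha}\geq\widetilde{w}_M$, which is exactly your lower bound via the competitor cell $[x-\alpha,x]\times[y,y+\alpha]$, and then performs the same change of variables and symmetry-doubling. The only cosmetic difference is that the paper first records the exact formula for $R_{w_M}^{\alpha}$ when $w_M$ is Robinson, whereas you go straight to the pointwise lower bound, which is all that is needed.
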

\noindent To prove the claim, first note that for any point $(x,y) \in \Delta$, we have that $w_M(x,y)$ is an upper bound for every value of $w_M$ over the set $\UL(x,y)$; thus, any average over that set (such as in the definition of $R_w^{\alpha}$) would not exceed $w_M(x,y)$. So $R_{w_M}^{\alpha} \leq w_M$. On the other hand, as $w_M$ is Robinson a.e., we have 
\begin{equation}\label{eq:rwm}
R_{w_M}^{\alpha}(x,y) = R_{w_M}^{\alpha}(y,x)=\begin{cases} \dfrac{1}{\alpha^2}\displaystyle\iint_{[x-\alpha,x]\times[y,y+\alpha]}w_Mdxdy & \text{ for } (x,y) \in [\alpha,1-\alpha]^2\cap \Delta\\  0 & \text{ otherwise }\end{cases}.
\end{equation}
We now introduce an auxiliary function $\widetilde{w}_M$ defined as follows:
\begin{equation*}
    \widetilde{w}_M(x,y) = \widetilde{w}_M(y,x) = \begin{cases} w_M(x-\alpha,y+\alpha) & (x,y) \in [\alpha,1-\alpha]^2 \cap \Delta \\ 0 & \text{ otherwise }\end{cases}.
\end{equation*}
Since $w_M$ is Robinson, we have that $R_{w_M}^{\alpha} \geq \widetilde{w}_M$. Thus we have $0\leq w_M - R_{w_M}^{\alpha} \leq w_M - \widetilde{w}_M$ pointwise, allowing us to show the following:
\begin{align*}
    \|w_M - R_{w_M}^{\alpha}\|_{\Box} &\leq  \|w_M - \widetilde{w}_M\|_{\Box} = 2\iint_{[0,1]^2\cap \Delta}(w_M-\widetilde{w}_M)dxdy \\
    &= 2\left(\iint_{[0,1]^2\cap \Delta}w_Mdxdy - \iint_{[\alpha,1-\alpha]^2\cap\Delta}\widetilde{w}_Mdxdy\right) \\
    &= 2\left(\iint_{[0,1]^2\cap \Delta}w_Mdxdy - \iint_{0\leq x\leq y-2\alpha\leq 1-2\alpha}w_M (x,y)dxdy\right)\\
    &\leq \iint_{\{|x-y|\leq2\alpha\}}w_M(x,y)dxdy.
\end{align*}
This proves Claim~\ref{claim-case2-approx}. 
Finally, note that
\begin{equation*}
\iint_{\{|x-y|\leq 2\alpha\}}w_M(x,y)dxdy \leq \|w_M\|_p\|\mathbbm{1}_{[0,1]^2\cap \{|x-y|\leq2\alpha\}}\|_q \leq 
(1-(1-2\alpha)^2)^{\frac{1}{q}} \leq (4\alpha)^{1-\frac{1}{p}},
\end{equation*}
where in the last inequality we used that $0\leq \alpha\leq 1$. Substituting for $\alpha$ and noting that $p>5$, we have 
$$\|w_M - R_{w_M}^{\alpha}\|_{\Box} \leq 4^{1-\frac{1}{p}}\Big(\frac{\Lambda(w)}{M}\Big)^{(\frac{2}{5})(1-\frac{1}{p})}
\leq 4{\Lambda(w)}^{\frac{2}{5}}.$$
Since $\Lambda(w)\leq 1$ and  $\frac{2}{5} \geq (p-5)(5p-5)^{-1}$ for all $p>5$, we get 
$\|w_M - R_{w_M}^{\alpha}\|_{\Box} \leq  4{\Lambda(w)}^{\frac{2}{5}}\leq 4\Lambda(w)^{\frac{p-5}{5p-5}}.$
Therefore, if $\Lambda(w_M)=0$, we have that $\|w-R_w^{\alpha}\|_{\Box} \leq 5\Lambda(w)^{\frac{p-5}{5p-5}}$, proving the statement of the theorem in this case.
\end{proof}
\mahya{
\begin{remark}[Concluding remarks]  
We finish this section with a discussion on the performance of $\Lambda$ and a comparison with the previously defined function $\Gamma$ from \cite{Chuangpishit_2015}.
\begin{itemize}
\item[(i)] The function $\Gamma$ provides a continuous and robust mapping on $L^p$-graphons only for the case $p=\infty$ (see Remark~\ref{remark:teddy}), whereas $\Lambda$ can be used as a suitable gauge of Robinson property for any $L^p$-graphon with $5<p\leq \infty$ (see Theorem \ref{thm:main-result}). Moreover, the error (in cut-norm) of the Robinson approximation provided by $\Gamma$ of a graphon $w:[0,1]^2\to [0,1]$ is bounded by $14\Gamma(w)^{\frac{1}{7}}$ (see \cite[Theorem 3.2]{Chuangpishit_2015}). However, using $\Lambda$ for Robinson approximation of $w$ leads to a much improved upper bound of $44\Lambda(w)^{\frac{1}{5}}$ (see Corollary~\ref{cor:bnddstability}). 
This improvement is partly due to the refined definition of $\Lambda$ (compared to $\Gamma$), 
and partly due to the possibility of using sharper results in Proposition~\ref{prop:robinson-upprbnd} for $L^p$-graphons to obtain bounds for $L^\infty$-graphons by letting $p\to \infty$.
\item[(ii)] Theorem \ref{thm:main-result} holds only for $p>5$. This is an artefact of the proof technique used and not an indication that similar results do not hold for $1 \leq p \leq 5$. We anticipate that ideological bottlenecks would appear at $p\leq 2$ and $p=1$. We conjecture that results analogous to Theorem \ref{thm:main-result} are valid for $p\geq 2$, though achieving these results would likely require new proof methods and more advanced techniques.
Indeed, our proof of Theorem~\ref{thm:main-result} is based on the idea of approximating an $L^p$-graphon with its $M$-cut-off, and then applying Robinson approximation results for bounded graphons to the cut-off function. The condition $p>5$ is essential for obtaining reasonable cut-off approximations (see \eqref{eq:w-wm-ineq}). 
This requirement is the primary impediment to adapting the proof of Theorem~\ref{thm:main-result} to $p\leq 5$.
\end{itemize}
\end{remark}
}

\section*{Acknowledgements}
We thank Jeannette Janssen for helpful comments which improved this work. We thank the Department of Mathematical Sciences at the University of Delaware for their continued support throughout the process of this research. 
M.~Ghandehari was supported by NSF grant DMS-1902301 while this work was being completed.
\mahya{Finally, we sincerely thank the anonymous reviewer for reading the
manuscript and suggesting improvements.}

\begin{appendices}
\section{Proofs of Lemmas~\ref{lem:grey-regions-small}, \ref{lem:splitint} and \ref{lem:gnrlpigeon}}\label{section:proofs of lemmas}
In this Appendix we present proofs of the lemmas used previously in the paper, recalling their statements for clarity.
\mahya{
\begin{lemma*}[Lemma \ref{lem:grey-regions-small}]
Let $k\in{\mathbb Z}^{\geq 0}$, $w \in \W^p$ with $w\geq 0$, and $\alpha \in (0,1)$. Then, $\overline{\G_k}$ does not contain any $\beta\times \beta$ square, where $\beta>\alpha$. Here, $\overline{\G_k}$ denotes the closure of $\G_k$ in the Euclidean topology of ${\mathbb R}^2$. 
\end{lemma*}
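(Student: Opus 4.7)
The plan is to proceed by contradiction. Suppose $\overline{\G_k}$ contains an axis-aligned square $Q = [a, a+\beta] \times [b, b+\beta]$ with $\beta > \alpha$. Since $\Delta$ is closed in $[0,1]^2$ and $\G_k \subseteq \Delta$, we have $Q \subseteq \overline{\G_k} \subseteq \Delta$, forcing $a + \beta \leq b$. The strategy is then to pit the definitions of $\B_k$ and $\V_k$ against each other by locating two grey points near opposite corners of $Q$ and exhibiting a single $\alpha\times\alpha$ cell $C$ that lies in the $\UL$-region of one grey point and the $\LR$-region of the other.

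More concretely, set $\mu = \delta = (\beta - \alpha)/3 > 0$ and define the fixed $\alpha \times \alpha$ cell
\begin{equation*}
C = [a+\delta,\, a+\delta+\alpha] \times [b+\beta-\delta-\alpha,\, b+\beta-\delta].
\end{equation*}
A short calculation using $b \geq a+\beta$ and $\delta + \alpha \leq \beta$ shows that $a + \delta + \alpha \leq b + \beta - \delta - \alpha$, so $C \subseteq \Delta$ and $|C| = \alpha^2$. Using $Q \subseteq \overline{\G_k}$, I would then locate points $(p_1, q_1), (p_2, q_2) \in \G_k$ within Euclidean distance $\mu$ of the lower-right corner $(a+\beta, b)$ and the upper-left corner $(a, b+\beta)$ of $Q$, respectively.

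The key technical step is to verify the two containments $C \subseteq \UL(p_1, q_1) = [0,p_1]\times [q_1,1]$ and $C \subseteq \LR(p_2, q_2) = [p_2,q_2]^2 \cap \Delta$. The first reduces to $a + \delta + \alpha \leq p_1$ and $q_1 \leq b + \beta - \delta - \alpha$, and the second to $p_2 \leq a+\delta$ and $q_2 \geq b + \beta - \delta$. All four inequalities follow from the distance bounds $|p_i - \cdot|, |q_i - \cdot| < \mu$ together with the choice $\mu = \delta = (\beta-\alpha)/3$. Intuitively, the gap $\beta > \alpha$ provides exactly the wiggle room needed to wedge $C$ inside both $\UL(p_1,q_1)$ and $\LR(p_2, q_2)$ simultaneously.

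The contradiction then writes itself: since $(p_1, q_1) \in \G_k$ means $(p_1, q_1) \notin \B_k$, no $\alpha$-cell in $\UL(p_1, q_1)$ has average exceeding $k/m$, hence $\overline{w}(C) \leq k/m$; but $(p_2, q_2) \in \G_k$ also lies outside $\V_k$ (while already outside $\B_k$), forcing every $\alpha$-cell in $\LR(p_2, q_2)$ to have average strictly greater than $k/m$, so $\overline{w}(C) > k/m$. I expect the main obstacle to be pure bookkeeping: balancing the perturbation parameter $\mu$ (how close the grey points lie to the corners) against the interior margin $\delta$ (how deep inside $Q$ the cell $C$ sits) so that the three containments $C \subseteq \Delta$, $C \subseteq \UL(p_1, q_1)$, and $C \subseteq \LR(p_2, q_2)$ all survive; the symmetric choice $(\beta-\alpha)/3$ is designed precisely to absorb every error term at once.
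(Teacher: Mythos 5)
Your argument is essentially the paper's: perturb a pair of opposite corners into $\G_k$, exhibit a single $\alpha\times\alpha$ cell $C$ lying simultaneously in $\UL$ of one grey point and $\LR$ of the other, and contradict the $\B_k$/$\V_k$ definitions. The paper takes a slightly more indirect route (it first argues that the whole open rectangle between the perturbed corners lies in $\G_k$, then picks any closed $\alpha\times\alpha$ cell inside), whereas you construct $C$ explicitly with the margin $\delta=(\beta-\alpha)/3$ and verify the two containments directly; both are correct, and your bookkeeping is cleaner to check.

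There is, however, a gap in scope. You only treat a geometric axis-aligned square $Q=[a,a+\beta]\times[b,b+\beta]$, but what the paper actually proves --- and later needs --- is the stronger statement that $\overline{\G_k}$ contains no combinatorial rectangle $S\times T$ with $S,T\subseteq[0,1]$ measurable and $|S|=|T|=\beta$. In the application (the pigeonhole step in the proof of Claim~\ref{claim:si0ti0ineq}) the cells $S_i\times T_j$ arise from the splitting of Lemma~\ref{lem:splitint}, and those $S_i,T_j$ are arbitrary measurable subsets, not intervals, so the geometric-square version is not sufficient. The repair is short and keeps your structure: since $|S|=\beta>\alpha$, the set $S$ cannot lie in any interval of length $\alpha$, so there exist $a_1,a_2\in S$ with $a_2-a_1>\alpha$; likewise $b_1,b_2\in T$ with $b_2-b_1>\alpha$; the points $(a_2,b_1)$ and $(a_1,b_2)$ then lie in $S\times T\subseteq\overline{\G_k}$, and you run your perturbation-and-cell argument with these two points in place of the square's corners (building $C$ inside the geometric rectangle $(a_1,a_2)\times(b_1,b_2)$, which plays the role of $Q$).

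One small verification you gloss over: you check $p_2\le a+\delta$ and $q_2\ge b+\beta-\delta$ but, since $\LR(p_2,q_2)=[p_2,q_2]^2\cap\Delta$, you also need $a+\delta+\alpha\le q_2$ and $p_2\le b+\beta-\delta-\alpha$; these do follow from $b\ge a+\beta$ and $2\delta+\alpha\le\beta$, so nothing is wrong, but the inequalities should be stated. Also note the paper's displayed thresholds $(k-1)/m$ in its version of the final contradiction are a typo for $k/m$; your $k/m$ is the correct value.
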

\begin{proof}[Proof of Lemma \ref{lem:grey-regions-small}]
Let $k \in \Z^{\geq 0}$ be fixed and let $\mathcal{G}_k$ be defined with parameter $m$ and $\alpha$.
Towards a contradiction, let $\beta > \alpha$ and suppose there exist measurable subsets $S, T \subseteq [0, 1]$ with $|S| = |T | = \beta$ for which $S \times T \subseteq \overline{\mathcal{G}_k}$. 
Since $|S| = |T | = \beta>\alpha$, there exist $a_1,a_2\in {S}$ and $b_1,b_2\in {T}$ such that 
 $a_2 - a_1 >\alpha\ \mbox{ and } \ b_2 - b_1 >\alpha.$
Note that $(a_i, b_j) \in \overline{\mathcal{G}_k}$ for $i, j = 1, 2$.
Since every point in $\overline{\mathcal{G}_k}$ that is not on the lower or upper boundary curves must be an interior point of $\mathcal{G}_k$,
 by moving the points slightly if necessary, we can assume that 
$$a_2 - a_1 >\alpha,\ \ b_2 - b_1 >\alpha, \ \mbox{ and } (a_i, b_j) \in{\mathcal{G}_k}, i, j = 1, 2.$$

We shall now show the inclusion $(a_1, a_2) \times (b_1, b_2) \subseteq {\mathcal{G}_k}$. 
Towards a contradiction, suppose that there exists some point $(z,w) \in \UL(a_2,b_1)\cap\LR(a_1,b_2)\setminus \mathcal{G}_k$, which implies that either $(z,w) \in \UL(a_2,b_1)\cap\LR(a_1,b_2)\cap\mathcal{B}_k$ or that $(z,w) \in \UL(a_2,b_1)\cap\LR(a_1,b_2)\cap\mathcal{V}_k$. The first case implies that $(a_2,b_1) \in \mathcal{B}_k$ while the second case implies that $(a_1,b_2) \in \mathcal{V}_k$, both of which are contradictions with $(a_i, b_j) \in {\mathcal{G}_k}$. Thus, $\mathcal{G}_k$ contains $(a_1, a_2) \times (b_1, b_2)$.

 Clearly, $(a_1, a_2) \times (b_1, b_2)$ contains a closed $\alpha \times\alpha$ rectangle which we denote $[a'_1, a'_2] \times [b'_1, b'_2]$. The two points $(a'_1, b'_2)$ and $(a'_2, b'_1)$ are elements of $\mathcal{G}_k$, so they fail to satisfy the conditions for both $\mathcal{V}_k$ and $\mathcal{B}_k$. In particular, we have $\overline{w}([a'_1, a'_2]\times [b'_1, b'_2]) \leq (k-1)/m$ as $(a'_2, b'_1) \not \in \mathcal{B}_k$, as well as $\overline{w}([a'_1, a'_2], [b'_1, b'_2]) > (k-1)/m$ as $(a'_1, b'_2) \not \in \mathcal{V}_k$. These statements form a contradiction, showing the initial claim must be true.
\end{proof}}
\begin{lemma*} (Lemma~\ref{lem:splitint})
Let $u\in L^\infty([0,1])$, let $P\subseteq [0,1]$ be a measurable subset such that $\int_P u~dx\neq 0$, and let $0 < \beta <|P|$ be fixed. 
Then $P$ can be partitioned into $N:=\lceil |P|/\beta\rceil$ subsets $P_1,\ldots,P_{N}$ so that the following conditions are satisfied:
\begin{onehalfspacing}
\begin{itemize}
    \item[(i)] $P_1 \leq \ldots \leq P_{N-1}.$
    
    \item[(ii)] $|P_i|= \beta$ for $1\leq i\leq N-1$ and $|P_{N}|\leq \beta$.
    
    \item[(iii)] $\big|\int_{P_{N}} u~dx\big|\leq \frac{1}{N}\big|\int_{P}u~dx\big|$. 
\end{itemize}
\end{onehalfspacing}
\end{lemma*}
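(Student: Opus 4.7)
The plan proceeds in two stages. First, observe that \emph{any} measurable subset $P_N \subseteq P$ of measure $r := |P| - (N-1)\beta$ determines a partition of $P$ satisfying (i) and (ii); the task then reduces to choosing $P_N$ so that (iii) also holds. Note $0 < r \leq \beta$, since $N = \lceil |P|/\beta\rceil$ forces $(N-1)\beta < |P| \leq N\beta$.

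To see the reduction, fix an order-preserving measure-preserving bijection $F : [0,|P|] \to P$, and, given $P_N$, let $A := F^{-1}(P_N)$. The complement $[0,|P|] \setminus A$ has measure $(N-1)\beta$ and carries a non-decreasing measure-preserving bijection $\psi : [0, (N-1)\beta] \to [0,|P|] \setminus A$ (obtained by ``skipping'' $A$ as we traverse $[0,|P|]$ from left to right). Defining $P_i := F(\psi([(i-1)\beta, i\beta]))$ for $i = 1, \ldots, N-1$ yields sets of measure $\beta$, and monotonicity of $F \circ \psi$ ensures $P_1 \leq \cdots \leq P_{N-1}$, even when some $P_i$ straddles the ``gap'' $A$.

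For stage two, extend $u \circ F$ to a $|P|$-periodic function $\tilde u : \R \to \R$ and define the continuous periodic function $\tilde\phi(t) := \int_t^{t+r} \tilde u(s)\, ds$. Writing $I := \int_P u\, dx$, a Fubini calculation in which each inner integral evaluates to $I$ by periodicity gives
\[
    \int_0^{|P|} \tilde\phi(t)\, dt \;=\; \int_0^{r}\!\int_0^{|P|} \tilde u(t + s)\, dt\, ds \;=\; rI,
\]
so the mean of $\tilde\phi$ on $[0,|P|]$ is $rI/|P|$. If $\tilde\phi$ is single-signed, then $\min_t |\tilde\phi(t)| \leq r|I|/|P|$ by comparison with the mean; if $\tilde\phi$ changes sign, the intermediate value theorem yields a point where $\tilde\phi$ vanishes. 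In either case some $t^* \in [0,|P|)$ satisfies $|\tilde\phi(t^*)| \leq r|I|/|P| \leq |I|/N$, where the last inequality uses $|P| = (N-1)\beta + r \geq Nr$ (since $\beta \geq r$). Finally, set $P_N$ to be the $F$-image of $[t^*, t^*+r]$ taken modulo $|P|$; this gives a measurable subset of $P$ of measure exactly $r$ with $\left|\int_{P_N} u\, dx\right| = |\tilde\phi(t^*)| \leq |I|/N$.

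The main point requiring care is the case analysis when the ``window'' $[t^*, t^*+r]$ wraps around the endpoint $|P|$: the set $P_N$ is then $F([t^*,|P|] \cup [0, t^*+r-|P|])$, a union of two intervals in $F$'s domain, rather than a single interval, which in turn changes whether it is $P_N$ or one of the $P_i$'s that becomes ``connected'' in the $F$-parametrization. Either way---wrap-around or not---properties (i)--(iii) follow uniformly from the construction via the order-preserving maps $F$ and $\psi$.
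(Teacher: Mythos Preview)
Your proposal is correct and takes a genuinely different route from the paper's proof. Both arguments begin with the same reduction: it suffices to locate a single subset $P_N\subseteq P$ of measure $r:=|P|-(N-1)\beta$ satisfying (iii), after which $P\setminus P_N$ is chopped into $N-1$ consecutive pieces of size $\beta$. From there the two arguments diverge. The paper proceeds by contradiction: it first constructs a set $Q$ of size $\delta=r$ with $\int_Q u>0$, then defines a sliding-window function $\phi$ over $R=P\setminus Q$, uses continuity to force $\phi$ to be single-signed under the contradictory hypothesis, and sums $M\approx |P|/\delta$ disjoint windows together with $\int_Q u$ to exceed $\int_P u$. Your approach is direct: after parametrizing $P$ by $[0,|P|]$, you extend $u\circ F$ periodically and observe that the mean of the sliding-window integral $\tilde\phi$ over one period equals $rI/|P|\leq |I|/N$; a mean-comparison (or intermediate-value) step then produces $t^*$ immediately. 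This avoids the auxiliary set $Q$, the contradiction framing, and the window-counting inequality $M\geq N$ that the paper has to verify separately.

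One small technical quibble: the phrase ``order-preserving measure-preserving bijection $F:[0,|P|]\to P$'' is not literally accurate when $P$ has gaps, but the standard quantile (inverse-CDF) construction $F(t)=\inf\{x:|P\cap[0,x]|\geq t\}$ gives a monotone map that pushes Lebesgue measure on $[0,|P|]$ forward to Lebesgue measure on $P$ up to a null set, and that is all your argument actually uses.
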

\begin{proof}[Proof of Lemma \ref{lem:splitint}]
For any sets $P_1,\ldots,P_N$ satisfying the above properties, we must have 
$$|P_N| =\delta := |P|-\beta\left(\left\lceil\frac{|P|}{\beta}\right\rceil-1\right).$$
To prove the lemma, it is enough to find a subset   $P_N\subseteq P$ satisfying $|P_N|=\delta$ and condition (iii). Indeed, given such a subset $P_N$, one can form the other sets $P_1,\ldots,P_{N-1}$ by splitting $P\setminus P_N$ into consecutive sets of measure $\beta$. If $\delta = \beta$, then the lemma follows trivially from the pigeonhole principle. So, we assume that $\delta < \beta$.
Moreover,  replacing $u$ by $-u$ if necessary, we can assume that $\int_P u~dx > 0$. 
We claim there must exist a set $Q \subset P$ such that $|Q| = |P|-\beta\left(\left\lceil \dfrac{|P|}{\delta}\right\rceil-1\right)$ and $\int_Q u~dx > 0$. To show this statement, note that
\begin{equation}\label{eq:Qpositivity}
    \int_P u~dx = \int_{P^+}u~dx + \int_{P^-}u~dx >0,
\end{equation}
where $P^+ := \{x \in P : u(x) > 0\}$ and $P^- :=\{y \in P : u(y) < 0\}$. If $|P^+|\geq \delta$, then any subset $Q$ of measure $\delta$ from $P^+$ suffices. If $|P^+| < \delta$, then we let $Q=P^+\cup Q'$ for a subset  $Q'\subseteq P^-$ with $|Q'|=\delta-|P^+|$, and note that 
 $\int_{Q} u~dx\geq \int_{P} u~dx>0$ by \eqref{eq:Qpositivity}. We now prove the existence of a subset $P_N\subseteq P$ satisfying $|P_N|=\delta$ and condition (iii). Towards a contradiction, assume that for any set $S \subseteq P$ such that $|S| = \delta$, we have 
\begin{equation}\label{eq:splitintcontra}
    \bigg|\int_S u~dx\bigg|> \frac{1}{N}\int_{P}u~dx.
\end{equation}
Consider now $R:=P \setminus Q$. For any $p \in P$, let $r_p = \inf \{q: |R \cap [p,q]|=\delta\}$, and let the auxiliary function $\phi$ be defined as
\begin{equation*}
    \phi: P \to \R, \  \phi(p) := \int_{R \cap [p,r_p]}u~dx.
\end{equation*}
For ease of writing, let $R_p := R \cap [p,r_p]$, and note that as $|R_p| = \delta$, by \eqref{eq:splitintcontra}, we get $\phi(p) \neq 0$ for all $p \in P$. It is easy to see that $\phi$ is continuous, as for $p,q\in P$, we have
\begin{equation*}
    |\phi(p)-\phi(q)| = \bigg|\int_{R_p}u~dx - \int_{R_q}u~dx\bigg| = \bigg|\int_{R_p \Delta R_q}u~dx\bigg|\\
    \leq \|u\|_{\infty}|R_p \Delta R_q| \leq \|u\|_{\infty}|p-q|.
\end{equation*}
Therefore, as $\phi \neq 0$, it is either strictly positive or strictly negative. Without loss of generality, we assume that $\phi(p) > 0$ for all $p$, and to avoid violating \eqref{eq:splitintcontra}, it must also be that 
\begin{equation}\label{eq:piecesbnd}
    \phi(p) > \frac{1}{N}\int_{P}u~dx
\end{equation}
for all $p \in P$. Consider $\{p_i\}_{i=1}^M \subset R$, where $p_1=0$, $p_{i+1}=r_{p_i}$ for $1 < i < M$, and $M= \left\lceil \frac{|P|}{\delta}\right\rceil-1$. Then, due to the positivity of $w$ and \eqref{eq:piecesbnd}, we have that 
\begin{equation}\label{eq:measurelemmacontra}
    \int_P u~dx= \sum_{i=1}^M \phi(p_i) + \int_Q u~dx
    > \frac{M}{N}\int_P u~dx.
\end{equation}
This will lead to a contradiction, as we will show that $M \geq N$, i.e. $\left\lceil \frac{|P|}{\delta}\right\rceil-1 \geq \left\lceil \frac{|P|}{\beta}\right\rceil$. We note that as $\delta < \beta$, it must be the case that $M \geq N-1$. Therefore, for \eqref{eq:measurelemmacontra} to avoid a contradiction, $M$ must be equal to $N-1$. This forces $\delta$ within a tight range of values: Specifically,
${|P|}/{\left\lceil \frac{|P|}{\beta}\right\rceil} \leq \delta < \beta.$
However, the exact value of $\delta$ is $|P|-\beta(\left\lceil \frac{|P|}{\beta}\right\rceil-1)$, and thus if $M = N-1$, we have 
\begin{eqnarray*}
\left\{
\begin{array}{l}
|P|-\beta\left(\left\lceil \frac{|P|}{\beta}\right\rceil-1\right) \geq \frac{|P|}{\left\lceil \frac{|P|}{\beta}\right\rceil}, \mbox{ which implies, }    |P| \geq \beta\left\lceil \frac{|P|}{\beta}\right\rceil\\
|P|-\beta\left(\left\lceil \frac{|P|}{\beta}\right\rceil-1\right)<\beta, \mbox{ or equivalently, } |P| < \beta\left\lceil \frac{|P|}{\beta}\right\rceil
\end{array}\right.
\end{eqnarray*}
This is a contradiction, so $M \geq N$, making \eqref{eq:measurelemmacontra} a contradiction. Thus the lemma holds true and such a partition of $P$ must exist. \qedhere
\end{proof}
\begin{lemma*} (Lemma~\ref{lem:gnrlpigeon})
Let $f \in L^1([0,1]^2)$, and let $S,S' \subseteq [0,1]$ be measurable subsets such that $|S|=|S'|$. Suppose for a constant $C > 0$ we have
\begin{equation*}
    \iint_{S \times S'} f~dxdy \geq C.
\end{equation*}
Then, for every $\alpha \in (0,1)$, there exist measurable sets $T\subset S$ and $T'\subset S'$ such that $|T| = |T'|=\alpha|S|$ and
\begin{equation*}
    \frac{1}{|T\times T'|}\iint_{T \times T'}f~dxdy \geq \frac{C}{|S \times S'|}.
\end{equation*}
\end{lemma*}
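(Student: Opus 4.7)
The plan is to reduce this two-dimensional statement to a one-dimensional ``top-fraction'' lemma and apply that lemma twice, once in each coordinate. The idea is that choosing $T$ to be the super-level set of the row-marginal of $f$ where the marginal is largest will retain at least an $\alpha$-fraction of the total mass; repeating the same procedure on the columns restricted to $T$ yields $T'$, and the two selections together produce the desired high-density rectangle.

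First I would introduce the marginal $g : S \to \R$ defined by $g(x) = \int_{S'} f(x,y)\, dy$. By Fubini, $g \in L^1(S)$ and $\int_S g\, dx = \iint_{S \times S'} f\, dx dy \geq C$. The key auxiliary statement I would then prove is the following one-dimensional sublemma: if $\varphi \in L^1(U)$ satisfies $\int_U \varphi \geq I$ on a measurable set $U$ of positive measure, and $0 < \alpha < 1$, then there exists measurable $V \subseteq U$ with $|V| = \alpha|U|$ and $\int_V \varphi \geq \alpha I$. To prove the sublemma, set
\[
t^* \;=\; \inf\bigl\{t \in \R : |\{x \in U : \varphi(x) > t\}| \leq \alpha|U|\bigr\}.
\]
By monotonicity and right-continuity of the distribution function, $|\{\varphi > t^*\}| \leq \alpha|U| \leq |\{\varphi \geq t^*\}|$, so one can pick a measurable $V$ with $\{\varphi > t^*\} \subseteq V \subseteq \{\varphi \geq t^*\}$ and $|V| = \alpha|U|$. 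Since $\varphi \geq t^*$ on $V$ and $\varphi \leq t^*$ on $U\setminus V$, I get $(1-\alpha)\int_V \varphi \geq (1-\alpha)\alpha|U|\,t^* \geq \alpha \int_{U\setminus V}\varphi$, which rearranges to $\int_V \varphi \geq \alpha\int_U \varphi \geq \alpha I$.

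Applying this sublemma to $g$ on $S$ produces $T \subseteq S$ with $|T| = \alpha|S|$ and $\int_T g \geq \alpha C$. I would then set $h(y) = \int_T f(x,y)\, dx$ for $y \in S'$, so that $\int_{S'} h\, dy = \int_T g \geq \alpha C$. Applying the sublemma a second time, now to $h$ on $S'$, yields $T' \subseteq S'$ with $|T'| = \alpha|S'| = \alpha|S|$ and $\int_{T'} h \geq \alpha^2 C$. By Fubini $\iint_{T \times T'} f = \int_{T'} h \geq \alpha^2 C$, and since $|T \times T'| = \alpha^2 |S \times S'|$, dividing gives the required density bound $\frac{1}{|T \times T'|}\iint_{T \times T'} f \geq \frac{C}{|S \times S'|}$.

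The only genuinely technical ingredient is the sublemma; in particular, the step of sandwiching $V$ between $\{\varphi > t^*\}$ and $\{\varphi \geq t^*\}$ to attain measure exactly $\alpha|U|$ is a routine measure-theory task (using continuity of the measure along monotone families of sets). I do not expect any real obstacle: the sign of $t^*$ is immaterial since the inequalities $\int_V \varphi \geq t^*|V|$ and $\int_{U\setminus V}\varphi \leq t^*|U\setminus V|$ are valid regardless, and the rest of the argument is a straightforward manipulation. The overall structure is a two-step greedy selection, and its correctness rests on the simple observation that averaging over a top-$\alpha$ super-level set can only increase the mean.
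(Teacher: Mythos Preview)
Your argument is correct and takes a genuinely different route from the paper's. The paper discretizes: it assumes $|S|=k/n$ and $\alpha|S|=\ell/n$, partitions $S$ and $S'$ into $k$ pieces each, and then applies a double-counting pigeonhole over all pairs of $\ell$-subsets $I,J\subseteq\{1,\dots,k\}$ to locate a single pair with $\sum_{i\in I,\,j\in J}\iint_{S_i\times S'_j}f\geq (\ell/k)^2 C$; the irrational case is handled by approximation. Your approach instead selects $T$ and $T'$ greedily as top super-level sets of the two successive marginals, proving and using the clean one-dimensional sublemma that the top $\alpha$-fraction by value of any $L^1$ function carries at least an $\alpha$-fraction of the total integral.

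Your route is more direct: it avoids the rational reduction and the combinatorial averaging, and the sublemma is a reusable fact in its own right. The paper's argument is more elementary in flavor (pure pigeonhole, no level-set machinery) and perhaps more transparent to a combinatorics audience, but it does not yield any additional structure on $T,T'$ that your method lacks, since its output is also just an arbitrary union of pieces. Both methods produce measurable $T\subset S$, $T'\subset S'$ of the required size with the required density bound, which is all the lemma claims and all its application in the proof of Proposition~\ref{prop:robinson-upprbnd} needs.
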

\begin{proof}[Proof of Lemma \ref{lem:gnrlpigeon}]
Suppose there exist integers $n,k,l$, with $l<k$, so that $|S| = \frac{k}{n}$ and $\alpha |S| = \frac{l}{n}$; the case where one or both of $|S|$ or $\alpha$ is not rational can be done using standard density/approximation arguments. 
Next, split $S$ into $k$ consecutive sets $S_1 \leq S_2 \leq \ldots \leq S_k$ of measure $\frac{1}{n}$; likewise, split $S'$ into $k$ consecutive sets $S'_1 \leq S'_2 \leq \ldots \leq S'_k$ also of measure $\frac{1}{n}$ and note that
\begin{equation*}
    \iint_{S \times S'} f~dxdy = \sum_{i = 1}^k \sum_{j=1}^k\iint_{S_i \times S'_j}f~dxdy.
\end{equation*}
Let $B_{i,j}:=\iint_{S_i \times S'_j} f~dxdy$ and let $\mathcal{N}_{k,l}$ denote the collection of all $l$-subsets of the set $\{1,...,k\}$. 
Note that there are $\binom{k-1}{l-1}$ many $l$-subsets of $k$ elements containing a specific element $i_0$. Using this, counting the number of times a specific $B_{i,j}$ appears in the following sum results in
\begin{equation*}
    \sum_{I \in \mathcal{N}_{k,l}}  \sum_{J \in \mathcal{N}_{k,l}} \sum_{\substack{i\in I\\j\in J}} B_{i,j} = 
    \sum_{i,j=1}^k \binom{k-1}{l-1}^2B_{i,j} \geq \binom{k-1}{l-1}^2C.
\end{equation*}
Thus, by the pigeonhole principle, there exist sets $I,J\in \mathcal{N}_{k,l}$ such that 
$$\sum_{\substack{i \in I\\j\in J}}B_{i,j} \geq \frac{\binom{k-1}{l-1}^2}{\binom{k}{l}^2}C = \frac{l^2}{k^2}C.$$
This implies that the sets $T = \cup_{i \in I}S_i$ and $T' = \cup_{i \in J}S'_i$ satisfy both that
\begin{equation*}
    \iint_{T \times T'} f~dxdy \geq \frac{l^2}{k^2}C
\end{equation*}
and $|T|=|T'|=\alpha|S|$, showing the lemma holds true.
\end{proof}
\end{appendices}


\end{document}